\definecolor{ultramarine}{RGB}{0,32,96}
\newcommand*{\defeq}{\mathrel{\vcenter{\baselineskip0.5ex
      \lineskiplimit0pt \hbox{\scriptsize.}\hbox{\scriptsize.}}}
  =}%
\def\Z{{\mathbb Z}}
\title{The Farrell-Jones conjecture for\\ hyperbolic-by-cyclic
  groups}%
\author{Mladen Bestvina}
\address{\tt M.\ Bestvina, Department of Mathematics, University of Utah, 155 S.\ 1400 E.\,
  \newline Salt Lake City, UT 84112, U.S.A.
  \newline http://www.math.utah.edu/$\sim$bestvina/} %
\email{\tt bestvina@math.utah.edu}
\author{Koji Fujiwara}
\address{\tt K.\ Fujiwara, Department of Mathematics, Kyoto University. Kyoto 606-8502, Japan.
  \newline http://www.math.kyoto-u.ac.jp/$\sim$kfujiwara/} %
\email{\tt kfujiwara@math.kyoto-u.ac.jp}
\author{Derrick Wigglesworth} %
\address{\tt D.\ Wigglesworth, Department of Mathematical Sciences, University
  of Arkansas, 309 SCEN, Fayetteville, AR 72703, U.S.A.
  \newline http://www.drwiggle.com} %
\email{\tt drwiggle@uark.edu}
\thanks{\today}
\begin{document}

\begin{abstract} We prove the Farrell-Jones Conjecture for mapping
  tori of automorphisms of virtually torsion-free hyperbolic
  groups. The proof uses recently developed geometric methods for
  establishing the Farrell-Jones Conjecture by Bartels-L\"uck-Reich, as well as the structure
  theory of mapping tori by Dahmani-Krishna.
\end{abstract}

\thanks{ The first author is supported by the NSF under the grant
  number DMS-1905720.  The second author is supported in part by
  Grant-in-Aid for Scientific Research (No.15H05739, 20H00114).  The third
  author would like to thank the Fields Institute for its
  hospitality. } \subjclass[2010]{Primary 20F65, 18F25.}
\maketitle

\section{Introduction}
\label{sec:intro}

The Farrell-Jones Conjecture (which we will frequently abbreviate by
FJC) was formulated in \cite{FJ:FJC}.  For a group $G$, the
Farrell-Jones Conjecture (relative to the family $\vcyc$ of virtually
cyclic subgroups)
 predicts an isomorphism between the $K$-groups
(resp.\ $L$-groups) of the group ring $RG$ and the evaluation of a
homology theory on a certain type of classifying space for $G$.  Such
a computation, at least in principle, gives a way of classifying all
closed topological manifolds homotopy equivalent to a given closed
manifold of dimension $\geq 5$, as long as the FJC is known for the
fundamental group.  Particularly significant consequences include the
Borel conjecture (a homotopy equivalence between aspherical manifolds
can be deformed to a homeomorphism) and the vanishing of the Whitehead
group $Wh(G)$ for torsion-free $G$.

There are different versions of the Farrell-Jones conjecture; we will
always mean ``with coeffcients in an additive category with a 
$G$-action'' and relative to $\vcyc$, see e.g. \cite[Section 4.2]{BB:FJCMCG}. This version has
convenient inheritance properties, for example passing to subgroups
and finite index supergroups.

The Farrell-Jones Conjecture has generated much interest in the last
decade, due in no small part to the recent development of an axiomatic
formulation that both satisfies useful inheritance properties, and
provides a method for proving the conjecture.  For example, FJC is
currently known for hyperbolic groups \cite{BLR:EquivariantCovers},
relatively hyperbolic groups \cite{Bar:RelHypFJC}, CAT$(0)$ groups
\cite{Wegner:FCJ-cat0}, virtually solvable groups
\cite{Wegner:FJC-Virt-solvable}, $\text{GL}_n(\mathbb{Z})$
\cite{BLRR:FJC-GLn}, lattices in connected Lie groups \cite{KLR:FJCLatticesInLieGroups}, and
mapping class groups \cite{BB:FJCMCG}. The reader is invited to
consult the papers
\cite{BLR:FJC-Applications,Bar:RelHypFJC,Bar:OnProofsofFJC,BB:FJCMCG,BFL:jams}
for more information on applications of FJC and methods of proof.

The primary goal of this paper is the following theorem.
\begin{theorem}\label{thm:main-hyperbolic}
  Let $G$ be a virtually torsion-free hyperbolic group and $\Phi\colon G\to G$
  be an automorphism of $G$.  Then the hyperbolic-by-cyclic group
  $G_\Phi=G\rtimes_\Phi\Z$ satisfies the $K$- and $L$-theoretic Farrell-Jones
  conjectures.
\end{theorem}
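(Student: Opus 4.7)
The plan is to combine the known inheritance properties of the Farrell--Jones conjecture with the Dahmani--Krishna structure theorem for mapping tori and the geometric machinery of Bartels--L\"uck--Reich. I would first reduce to the case where $G$ is torsion-free: pick a torsion-free characteristic subgroup $G_0\leq G$ of finite index and replace $\Phi$ by a power $\Phi^k$ with $\Phi^k(G_0)=G_0$; then $G_0\rtimes_{\Phi^k}\Z$ sits as a finite-index subgroup of $G_\Phi$, so by the inheritance of FJC under finite-index overgroups mentioned in the introduction it suffices to prove FJC for $G_0\rtimes_{\Phi^k}\Z$. From here on assume $G$ is torsion-free hyperbolic.

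Next I would invoke the Dahmani--Krishna structure theorem to present $G_\Phi$ as a group hyperbolic relative to a finite family $\mathcal{P}$ of peripheral subgroups. Each $P\in\mathcal{P}$ is itself a mapping torus, but of the restriction of $\Phi$ (or a power of it) to a subgroup of $G$ on which the restriction grows \emph{polynomially}; these peripherals are intended to absorb exactly the non-hyperbolic dynamics of $\Phi$. With this relative hyperbolic structure in hand, Bartels' inheritance theorem for the Farrell--Jones conjecture under relative hyperbolicity (an application of the Bartels--L\"uck--Reich geometric axioms already used for hyperbolic groups) reduces FJC for $G_\Phi$ to FJC for each $P\in\mathcal{P}$.

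The main work, and the principal obstacle, is then to verify FJC for each peripheral mapping torus $P$. These are structurally simpler than $G_\Phi$: polynomially-growing automorphisms admit relative train-track representatives in the spirit of Bestvina--Feighn--Handel, and the resulting mapping tori are expected to be virtually polycyclic or of $\mathrm{CAT}(0)$ type, in which case FJC follows from Wegner's theorems; alternatively, one may induct on a complexity invariant that measures the intricacy of the polynomial dynamics, with base case either the virtually polycyclic situation or the case that $G_\Phi$ itself is already hyperbolic (so that no peripherals appear). Two subsidiary technical points that will need care are (i) checking that the Dahmani--Krishna peripheral system actually satisfies the almost-malnormality and quasi-convexity hypotheses needed to apply Bartels' relatively hyperbolic version of FJC, and (ii) tracking the initial torsion-free reduction through the whole argument, so that passing from $\Phi^k$ back to $\Phi$ and from $G_0$ back to $G$ does not lose information.
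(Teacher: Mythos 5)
Your opening reduction (pass to a torsion-free characteristic finite-index subgroup, use inheritance under finite-index overgroups) and your use of Dahmani--Krishna plus Bartels' relatively hyperbolic version of FJC match the paper exactly; in the paper this is precisely how the exponentially growing case with infinitely many ends is dispatched. Also, your worry (i) is unnecessary: Bartels' theorem as used here needs only the relative hyperbolic structure that Dahmani--Krishna provide, with peripherals satisfying FJC.

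The genuine gap is in what you call ``the main work'': FJC for the peripheral mapping tori $P=H_i\rtimes_{\psi_i}\Z$ with $\psi_i$ polynomially growing. Your suggestion that these are ``virtually polycyclic or of $\mathrm{CAT}(0)$ type'' so that Wegner's theorems apply is false in general: already for $\psi_i=\mathrm{id}$ one gets $H_i\times\Z$ with $H_i$ an arbitrary torsion-free hyperbolic group (not virtually polycyclic, and not known to be $\mathrm{CAT}(0)$), and Dehn-twist automorphisms of one-ended hyperbolic groups or of free products give mapping tori outside both classes. Worse, when $\Phi$ itself grows polynomially --- in particular whenever $G$ is one-ended, since then every element grows polynomially --- the Dahmani--Krishna peripheral structure is degenerate (some $P_i$ is all of $G_\Phi$), so the relative hyperbolicity step gives no reduction at all and the polynomially growing case must be proved directly; it is the heart of the paper, not a simpler residue. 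The paper handles it as follows: for one-ended $G$ it modifies the JSJ decomposition so that $G_\Phi$ acts acylindrically on a tree with vertex stabilizers of the form $V\times\Z$, $\Z\times\Z$, or fibered $3$-manifold groups, and applies Knopf's theorem (Theorem \ref{Knopf}); for infinitely-ended $G$ it inducts on the Grushko rank using the Collins--Turner relative train track theorem, splits the mapping torus over $\Z$ along the top stratum, and passes to the tree of cylinders, whose acylindricity rests on a ``neatness'' theorem (periodic elements of a suitable power of $\Phi$ are fixed, Theorem \ref{neat}, itself requiring the one-ended analysis as base case) and on Neumann's theorem that $\Fix(\Phi)$ is quasiconvex, hence hyperbolic, so the new vertex stabilizers $\Fix(\Phi)\times\Z$ lie in $\AC(\vnil)$. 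None of these ingredients --- the JSJ/acylindricity argument, Knopf's tree theorem, the neatness statement, or the fixed-subgroup input --- appears in your proposal, and your vague ``complexity induction'' has no identified base case or inductive mechanism, so as written the argument does not go through.
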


Conjecturally, all hyperbolic groups are virtually torsion-free. In
our proof we quickly reduce to the case of torsion-free hyperbolic
groups, which simplifies the arguments. It is possible that a similar
proof can be given for general hyperbolic groups, but we decided
against attempting such a proof in view of the new technical difficulties
and the above conjecture.

Theorem \ref{thm:main-hyperbolic} generalizes the case when $G$ is a
finite rank free group, which we proved in \cite{BFW}, and the present
paper supersedes it, so \cite{BFW} will not be published
independently. Br\"uck, Kielak and Wu generalized this result to
infinitely generated free groups and further to normally poly-free
groups in \cite{BKW}.

Perhaps the most important consequence is the following:

\begin{theorem}\label{thm:extensions-hyperbolic}
  Let $G$ be a virtually torsion-free hyperbolic group and let
  \begin{equation*}
    1\longrightarrow G\longrightarrow H\longrightarrow Q\longrightarrow 1
  \end{equation*}
  be a short exact sequence of groups.  If $Q$ satisfies the
  $K$-theoretic (resp.\ $L$-theoretic) Farrell-Jones Conjecture, then
  so does $H$.
\end{theorem}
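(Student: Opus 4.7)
The plan is to combine Theorem~\ref{thm:main-hyperbolic} with the fibered (transitivity) principle for the Farrell--Jones Conjecture. Given a surjection $\pi\colon H\to Q$, that principle asserts that if $Q$ satisfies FJC, then $H$ satisfies FJC as soon as the preimage $\pi^{-1}(V)$ satisfies FJC for every virtually cyclic subgroup $V\leq Q$. So the entire argument reduces to verifying FJC for these preimages in the two cases $V$ finite and $V$ infinite virtually cyclic.

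For a finite subgroup $V\leq Q$, the preimage $\pi^{-1}(V)$ contains $G$ as a finite index normal subgroup, hence is commensurable with $G$. Since $G$ is a virtually torsion-free hyperbolic group, $G$ itself satisfies FJC by \cite{BLR:EquivariantCovers}, and $\pi^{-1}(V)$ inherits FJC by passage to finite index supergroups. For an infinite virtually cyclic $V$, choose an infinite cyclic subgroup $V_0\cong\mathbb{Z}$ of finite index in $V$. Then $\pi^{-1}(V_0)$ has finite index in $\pi^{-1}(V)$ and fits into a short exact sequence
\[
1 \to G \to \pi^{-1}(V_0) \to V_0 \to 1
\]
that splits because $V_0$ is free; conjugation by a lift of a generator of $V_0$ gives an automorphism $\Phi\colon G\to G$ with $\pi^{-1}(V_0)\cong G\rtimes_\Phi\mathbb{Z}$. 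Theorem~\ref{thm:main-hyperbolic} then gives FJC for $G\rtimes_\Phi\mathbb{Z}$, and a second application of inheritance under finite index supergroups upgrades this to FJC for $\pi^{-1}(V)$.

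With FJC established for $Q$ (by hypothesis) and for every preimage $\pi^{-1}(V)$, the transitivity principle yields FJC for $H$. The main theorem does all of the heavy lifting, so there is no genuine obstacle here beyond Theorem~\ref{thm:main-hyperbolic} itself; the remaining ingredients are standard inheritance properties of FJC recorded for instance in \cite[Section 4.2]{BB:FJCMCG}. The only point worth checking carefully is that $\pi^{-1}(V_0)$ really does split as a semidirect product $G\rtimes\mathbb{Z}$, which is immediate from the freeness of $\mathbb{Z}$.
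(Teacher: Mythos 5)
Your proof is correct and takes essentially the same route as the paper: both invoke the transitivity principle (the paper cites \cite[Theorem 2.7]{BFL:jams}) to reduce to the preimages of virtually cyclic subgroups, identify a finite-index subgroup of each preimage of the form $G\rtimes_\Phi\mathbb{Z}$ (handled by Theorem \ref{thm:main-hyperbolic}), and conclude by inheritance under finite index supergroups. Your explicit separate treatment of finite $V$ (using FJC for hyperbolic groups) is a minor point the paper glosses over, not a different approach.
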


The group $G_\Phi$ appearing in the statement of Theorem
\ref{thm:main-hyperbolic} depends only on the outer class of $\Phi$,
which we denote by $\phi$, and sometimes we use $G_\phi$ for $G_\Phi$.
Outer automorphisms of torsion free hyperbolic groups are well
understood (see
\cite{sela,Levitt:AutomorphismsHyperbolicGroups,GL:RelativeOuterSpace}
for a small sample).  Broadly speaking, the tools used to study such
automorphisms differ based on the number of ends of $G$: when $G$ is
one-ended, techniques from JSJ decompositions are used; when $G$ has
infinitely many ends, one uses the Grushko decomposition and the
associated relative Outer space. Our proof of Theorem
\ref{thm:main-hyperbolic} is no different; though our techniques in
the latter case are further divided according to whether the
automorphism $\phi$ is \emph{polynomially growing} or
\emph{exponentially growing}.

The structure of this paper is as follows: Section
\ref{sec:background} contains the background material on the
Farrell-Jones Conjecture necessary for our purposes.  In particular,
we review the geometric group theoretic means for proving FJC
developed in \cite{FJ:FJC} and subsequently refined and generalized in
\cite{BLR:EquivariantCovers,BLR:FJC-Hyperbolic}.  In Section
\ref{sec:one-ended} we treat the case that $G$ is a one-ended
hyperbolic group.  Then in Section \ref{sec:PG-autos}, we treat
polynomially growing automorphisms of groups with infinitely many
ends.  The case of exponentially growing automorpisms is easy in
comparison, thanks to the recent structure theorem of Dahmani-Krishna
\cite{DK:RelHypHypAutos} which asserts that $G_\Phi$ is relatively
hyperbolic; it is handled in Section \ref{sec:EG-autos}. In Section
\ref{sec:extensions} we prove Theorem \ref{thm:extensions-hyperbolic}
using standard techniques. The last section contains a proof of a
technical result needed for the polynomially growing case.

\section{Background}
\label{sec:background}
For the remainder of this paper, $G$ will denote a finitely generated
group.  While the Farrell-Jones Conjecture originated in
\cite{FJ:FJC}, its present form with coefficients in an additive
category is due to Bartels-Reich \cite{BR:FJC-Coeffs} (in the
$K$-theory case) and Bartels-L\"uck \cite{BL:FJC-Twisted} (in the
$L$-theory case). We
restrict ourselves to the axiomatic formulation.

A \emph{family} $\matheurm{F}$ of subgroups of $G$ is a non-empty
collection of subgroups which is closed under conjugation, taking
subgroups and finite index supergroups.
For example, the collection $\fin$ of finite subgroups of
$G$ is a family, as is the collection $\vcyc$ of virtually cyclic
subgroups of $G$.

\subsection{Geometric Axiomatization of FJC}
\label{Sec:GeometricMethods}

We first recall a regularity condition that has been useful in recent
results on the Farrell-Jones conjecture
\cite{BLR:FJC-Hyperbolic,Bar:RelHypFJC,Knopf:FJCTrees}.  Let $X$ be a
space on which $G$ acts by homeomorphisms and $\matheurm{F}$ be a
family of subgroups of $G$.  An open subset $U\subseteq X$ is said to
be an $\matheurm{F}$-subset if there is $F\in\matheurm{F}$ such that
$gU = U$ for $g\in F$ and $gU\cap U = \emptyset$ if $g\notin F$.  An
open cover $\mathcal{U}$ of $X$ is \emph{$G$-invariant} if
$gU\in\mathcal{U}$ for all $g\in G$ and all $U\in\mathcal{U}$.  A
$G$-invariant cover $\mathcal{U}$ of $X$ is said to be an
\emph{$\matheurm{F}$-cover} if the members of $\mathcal{U}$ are all
$\matheurm{F}$-subsets.  The \emph{order} (or multiplicity) of a cover
$\mathcal{U}$ of $X$ is less than or equal to $N$ if each $x\in X$ is
contained in at most $N + 1$ members of $\mathcal{U}$.

\begin{definition}
  Let $\matheurm{F}$ be a family of subgroups of $G$.  An action
  $G\curvearrowright X$ is said to be
  \emph{$N$-$\matheurm{F}$-amenable} if for any finite subset $S$ of
  $G$ there exists an open $\matheurm{F}$-cover $\mathcal{U}$ of
  $G\times X$ (equipped with the diagonal $G$-action) with the
  following properties:
  \begin{itemize}
  \item the multiplicity of $\mathcal{U}$ is at most $N$;
  \item for all $x\in X$ there is $U \in \mathcal{U}$ with
    $S\times \{x\}\subseteq U$.
  \end{itemize}
  An action that is $N$-$\matheurm{F}$-amenable for some $N$ is said
  to be \emph{finitely $\matheurm{F}$-amenable}.  We remark that such
  covers have been called {\it wide} in some of the literature.
\end{definition}

\subsection{The class $\AC(\vnil)$}
\label{sec:inheritance}
Following \cite{BB:FJCMCG} we now define the class of groups
$\AC(\vnil)$ that satisfy suitable inheritance properties and all
satisfy FJC.  Let $\vnil$ denote the class of finitely generated
virtually nilpotent groups.
Set $\ac^0(\vnil)=\vnil$ and
inductively $\ac^{n+1}(\vnil)$ consists of groups $G$ that admit a
finitely $\matheurm{F}$-amenable action on a compact Euclidean retract
(ER) with all groups in $\matheurm{F}$ belonging to
$\ac^n(\vnil)$. The action on a point shows that
$\ac^n(\vnil)\subseteq \ac^{n+1}(\vnil)$ and we set
$\AC(\vnil)=\bigcup_{n=0}^\infty \ac^n(\vnil)$.

\begin{proposition}[\cite{BB:FJCMCG}]\label{prop:FJCInheritance}
  \begin{enumerate}
  \item [(i)] $\AC(\vnil)$ is closed under taking subgroups, taking
    finite index supergroups, and finite products.
  \item [(ii)] All groups in $\AC(\vnil)$ satisfy the Farrell-Jones
    Conjecture.
  \end{enumerate}
\end{proposition}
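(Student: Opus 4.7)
The plan is to prove both parts simultaneously by induction on the least $n$ such that $G\in \ac^n(\vnil)$, with all three closure properties of (i) established at each level before moving to the next.

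The base case $n=0$ is the class $\vnil$ of finitely generated virtually nilpotent groups, which is classically closed under subgroups, finite index supergroups, and finite products; FJC for virtually nilpotent (in fact, virtually solvable) groups is due to Wegner \cite{Wegner:FJC-Virt-solvable}.

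For the inductive step of (i), fix $G\in \ac^{n+1}(\vnil)$ witnessed by a finitely $\mathcal{F}$-amenable action on a compact ER $X$ with $\mathcal{F}\subseteq \ac^n(\vnil)$. For a subgroup $H\leq G$, I would restrict the action and the witnessing cover to $H\curvearrowright X$; the new stabilizers are of the form $H\cap F$ for $F\in\mathcal{F}$ and lie in $\ac^n(\vnil)$ by the inductive hypothesis for subgroups. For a product $G_1\times G_2$ with $G_i\curvearrowright X_i$ at level $n+1$, the diagonal action on $X_1\times X_2$ is finitely $(\mathcal{F}_1\times\mathcal{F}_2)$-amenable, and the inductive hypothesis for products places these stabilizers in $\ac^n(\vnil)$. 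For a finite index supergroup $G'\supseteq G$, I would employ the induced action on $X^{[G':G]}$ in which $G'$ permutes factors via its action on coset representatives and each factor carries a conjugate of the $G$-action; this is still a compact ER, and the resulting stabilizer family consists of subgroups commensurable with finite intersections of products of conjugates of members of $\mathcal{F}$, so all three closure properties at level $n$ are needed to conclude the family lies in $\ac^n(\vnil)$.

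For the inductive step of (ii), a finitely $\mathcal{F}$-amenable action of $G$ on a compact ER yields FJC for $G$ relative to $\mathcal{F}$ via the geometric criterion of Bartels-L\"uck-Reich \cite{BLR:EquivariantCovers,BLR:FJC-Hyperbolic}. By the inductive hypothesis, every $F\in\mathcal{F}$ satisfies FJC relative to $\vcyc$, and the Transitivity Principle then upgrades FJC for $G$ from relative to $\mathcal{F}$ to relative to $\vcyc$. The main obstacle is organizing the finite index supergroup step of (i): the induced action construction must be arranged so that the new stabilizer family—built from products, intersections, and conjugates—provably remains at level $n$, which is precisely why the three closure statements in (i) must be bundled into a single induction rather than proved separately.
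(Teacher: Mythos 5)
This proposition is not proved in the paper at all: it is imported verbatim from Bartels--Bestvina \cite{BB:FJCMCG}, and your outline reconstructs essentially the argument given there --- a simultaneous induction on the least $n$ with $G\in\ac^n(\vnil)$, subgroups handled by restricting the wide covers, finite products by diagonal actions on products of compact ERs, finite-index supergroups by the coinduced action on $\map_G(G',X)\cong X^{[G':G]}$, and part (ii) by the Bartels--L\"uck--Reich/Wegner axiomatic theorem (finitely $\mathcal{F}$-amenable action on a compact ER gives FJC relative to $\mathcal{F}$) together with the transitivity principle and the inductive hypothesis on $\mathcal{F}$. One minor inaccuracy: for the coinduced action the relevant family consists of finite-index supergroups of finite intersections of $G'$-conjugates of members of $\mathcal{F}$ (no products of groups arise), so that step needs only subgroup and finite-index closure at level $n$, with product closure proved separately via the diagonal action; this does not affect your overall strategy, which is correct and matches the cited source.
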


Our main result can now be stated as follows.

\begin{theorem}\label{thm:main}
  Let $G$ be a torsion-free hyperbolic group and let $\Phi:G\to G$ be
  an automorphism.  Then $G_\Phi=G\ltimes_\Phi \Z$ belongs to
  $\AC(\vnil)$.
\end{theorem}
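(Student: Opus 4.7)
The plan is to argue by induction on the structural complexity of $G$, splitting into cases according to the number of ends of $G$ and, in the infinitely-ended case, according to the growth type of $\Phi$. The unifying strategy is to prove membership in $\AC(\vnil)$ by exhibiting, for each $G_\Phi$ under consideration, a finitely $\matheurm{F}$-amenable action on a compact ER where every group in $\matheurm{F}$ has already been placed in $\AC(\vnil)$; Proposition~\ref{prop:FJCInheritance} and the inductive definition of $\AC(\vnil)$ then apply. The trivial case where $G$ is finite reduces $G_\Phi$ to a virtually cyclic group, hence to $\vnil\subseteq\AC(\vnil)$, so I may assume $G$ is infinite. Since hyperbolic groups have either one or infinitely many ends, this yields two main cases.

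When $G$ is one-ended torsion-free hyperbolic, I would invoke the JSJ decomposition of $G$ and the Levitt-Sela description of $\Out(G)$. After passing to a finite index subgroup on which $\Phi$ preserves the decomposition (allowed by the finite-index inheritance for $\AC(\vnil)$), the Bass-Serre tree of the JSJ, suitably thickened by auxiliary compact spaces at each vertex, provides the target ER. Rigid vertex groups have finite outer automorphism groups, contributing virtually cyclic mapping-torus stabilizers; maximal hanging Fuchsian vertices are surface groups, whose mapping tori are fundamental groups of fibered $3$-manifolds covered by existing CAT$(0)$/hyperbolic results. In the infinitely-ended case the Grushko decomposition gives $G=G_1*\cdots*G_k*F_N$ with each $G_i$ one-ended hyperbolic; after replacing $\Phi$ by a finite power one may assume $\Phi$ preserves the set $\matheurm{F}=\{[G_i]\}$, placing us in the relative framework $\OutGF$. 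If $\phi$ is exponentially growing, the Dahmani-Krishna theorem \cite{DK:RelHypHypAutos} shows $G_\Phi$ is hyperbolic relative to a finite family of mapping tori of polynomially growing relative automorphisms, and the FJC for relatively hyperbolic groups \cite{Bar:RelHypFJC} reduces this case to the polynomial one. If $\phi$ is polynomially growing, I would work with the relative Outer space $\OGF$ or its projectivization: passing to a relative CT (or train-track) representative of $\phi$, one obtains a filtration by $\phi$-invariant subgroup systems, and a suitable $\phi$-invariant compact subspace of (a bordification of) $\POGF$ should carry the needed action of $G_\Phi$. The strata of the CT would be used to build the $\matheurm{F}$-cover, with stratum-isotropy groups being mapping tori of polynomially growing automorphisms of strictly simpler subgroups, handled inductively on the number of strata.

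The main obstacle is the polynomially growing case on the infinitely-ended side. Unlike the exponential case, there is no intrinsic hyperbolic geometry to exploit, so ``wide'' covers must be constructed by hand from the stratified CT data while simultaneously controlling both the multiplicity and the isotropy. The delicate point is to ensure that at every stratum the isotropy groups lie in a strictly lower level $\ac^n(\vnil)$ of the hierarchy, and that the compact ER can be chosen $G_\Phi$-invariant; this presumably requires a careful flow or push argument on relative Outer space, together with a Flow Space-style transfer of the $\matheurm{F}$-amenability from the base dynamical system to $G_\Phi$. This step is the technical heart of the argument and is likely the content of the final ``technical result'' section flagged in the introduction.
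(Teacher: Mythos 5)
Your case division (one-ended via JSJ; infinitely many ends split into polynomially and exponentially growing, with the EG case reduced to the PG case by Dahmani--Krishna plus relative hyperbolicity) matches the paper, and the EG reduction is correct as you state it. But in the two cases that carry the actual content, your proposal has genuine gaps rather than proofs. In the PG case you propose to build wide $\matheurm{F}$-covers by hand on a compactified relative Outer space and you explicitly leave the construction open (``should carry the needed action'', ``presumably requires a careful flow or push argument''). That construction is not what the paper does and is not known to work; the paper avoids constructing amenable covers here altogether. Instead it inducts on Grushko rank: a Collins--Turner train track (Proposition \ref{CT2}) gives a topmost edge, the topological mapping torus then splits as $G_\phi=H_1*_{\Z}H_2$ over the annulus coming from that edge, and the tree of cylinders of the resulting Bass--Serre tree gives an \emph{acylindrical} action of $G_\phi$ on a tree whose vertex stabilizers are mapping tori of smaller Grushko rank (induction) or $\Fix(\Phi)\times\Z$ (in $\AC(\vnil)$ by Neumann's quasiconvexity of $\Fix(\Phi)$); Knopf's theorem (Theorem \ref{Knopf}) then concludes. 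The acylindricity hinges on Lemma \ref{lem:EdgeStabilizers}, which in turn needs the ``neatness'' Theorem \ref{neat} (a power of $\phi$ has all periodic elements of all lifts actually fixed); that, not a flow-space transfer, is the deferred technical result you allude to, and nothing in your outline supplies a substitute for it.

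The one-ended case is likewise only sketched: ``the Bass--Serre tree of the JSJ, suitably thickened by auxiliary compact spaces, provides the target ER'' does not address the real difficulty, namely that $G_\Phi$ acts on the JSJ tree but typically not acylindrically (for instance $t$ fixes every edge whose stabilizer is fixed by $\Phi$, and these edges can form unbounded sets). The paper spends the whole of Section \ref{sec:one-ended} modifying the tree --- refining QH vertices along the reducing multicurve, folding commensurable edge stabilizers at rigid vertices, restructuring the subtrees $Z(E)$, and collapsing the subtrees $R(g)$ --- precisely to force acylindricity (Proposition \ref{acylindrical}) before applying Theorem \ref{Knopf}; your outline contains no mechanism playing this role. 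Also, your claim that rigid vertex groups ``contribute virtually cyclic mapping-torus stabilizers'' is wrong: after passing to a power, $\Phi$ restricts to an inner automorphism on a rigid vertex group $V$, so the corresponding stabilizer in $G_\Phi$ is $V\times\Z$ with $V$ a non-cyclic hyperbolic group --- it lies in $\AC(\vnil)$ by closure under products, but not because it is virtually cyclic. Finally, note that the one-ended analysis feeds the PG case through Corollary \ref{fixed} (the base of the induction proving Theorem \ref{neat}), a dependency absent from your plan.
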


If $G$ is only virtually torsion-free, there is a finite index
characteristic subgroup $H<G$ which is torsion-free, and then $H_\Phi$
has finite index in $G_\Phi$, so Theorem \ref{thm:main-hyperbolic}
follows from Theorem \ref{thm:main} and Proposition
\ref{prop:FJCInheritance}(i). 

The following two theorems of Knopf and of Bartels will be
crucial. Recall that an isometric action of a group $G$ on a tree $T$
is {\it acylindrical} if there exists $D>0$ such that whenever $x,y\in
T$ are at distance $\geq D$ then the stabilizer of $[x,y]$ has
cardinality $\leq D$.

\begin{theorem}[{\cite[Corollary 4.2]{Knopf:FJCTrees}}]\label{Knopf}
  Let $G$ act acylindrically on a simplicial tree $T$ with finitely
  many orbits of edges. If all vertex stabilizers belong to
  $\AC(\vnil)$ then so does $G$.
\end{theorem}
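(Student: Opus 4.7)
The plan is to prove $G \in \ac^{n+1}(\vnil)$, where we pick $n$ such that every vertex stabilizer belongs to $\ac^n(\vnil)$ (finitely many such stabilizers need be considered, since finitely many edge orbits forces finitely many vertex orbits). Unwinding the definition, I must produce a compact ER $X$ with a $G$-action and, for each finite $S \subseteq G$, an $\matheurm{F}$-cover of $G \times X$ of uniformly bounded multiplicity, every member of which has a stabilizer in $\ac^n(\vnil)$.

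For the space, I would build $X$ as a compactified ``geometric realization'' of the graph-of-groups decomposition of $G$ afforded by $T$. Concretely, for each orbit representative vertex $v$ the inductive hypothesis furnishes a compact ER $X_v$ on which $G_v$ acts finitely $\matheurm{F}_v$-amenably, with $\matheurm{F}_v \subseteq \ac^{n-1}(\vnil)$. Replace each vertex of $T$ by a copy of the appropriate $X_v$, glue along edges via mapping cylinders of the inclusions of edge stabilizers into vertex stabilizers, and then take a $G$-equivariant compactification by adjoining endpoints along each ray of $T$. Finiteness of edge orbits makes the construction compact up to equivariance; contractibility of each $X_v$ together with tree-like contractibility makes the total space contractible, and standard techniques realize it as a compact ER.

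The cover of $G \times X$ is assembled from two regimes. For $(g, x)$ with $x$ near a vertex fiber $X_v$, push forward the prescribed $\matheurm{F}_v$-cover of $G_v \times X_v$ along $G_v \hookrightarrow G$ and translate by coset representatives; the resulting stabilizers are conjugates of groups in $\matheurm{F}_v$, hence in $\ac^{n-1}(\vnil) \subseteq \ac^n(\vnil)$. For $(g, x)$ with $x$ in an edge region over which the entire $S$-orbit of $x$ still projects to a long enough subsegment of $T$, cover by sets whose stabilizer is the pointwise stabilizer of that long segment; acylindricity makes this stabilizer finite, hence in $\vnil \subseteq \ac^n(\vnil)$. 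The wide-containment condition is checked by selecting, for each $x$, either the vertex-fiber $U$ (if $x$ is near one) or a sufficiently long subsegment covering the projection of $S \cdot x$.

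The main obstacle is multiplicity control in the transition zone where the two regimes meet: one must pick the threshold for ``close to a vertex fiber'' $G$-equivariantly, and verify that, even as $S$ grows, both the number of vertex-type and segment-type covers over any $(g, x)$ remain uniformly bounded. Acylindricity and finiteness of edge orbits provide the essential control: the former bounds how long a segment one must take to cut stabilizers down to finite groups, while the latter bounds the combinatorial complexity of $T$ at any scale. A secondary subtlety is ensuring that the compactified tree-of-ERs is genuinely an ER (a compact contractible ANR embeddable in Euclidean space) and not merely a contractible compactum; this is standard but demands that the gluings along edge stabilizers be chosen tamely.
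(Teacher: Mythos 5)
This statement is not proved in the paper at all: it is imported verbatim from Knopf (Corollary 4.2 of the cited work), so the relevant comparison is with her proof. That proof takes the compact space $X$ to be a compactification of the tree $T$ itself (adding the space of ends), and takes the family $\matheurm{F}$ to consist of the vertex stabilizers together with the virtually cyclic subgroups; the point of the class $\AC(\vnil)$ is precisely that members of $\matheurm{F}$ may be large groups, as long as they lie one level down in the hierarchy. There is therefore no need to ``resolve'' vertex fibers by the spaces $X_v$, and your plan of gluing the $X_v$'s and pushing forward the $\matheurm{F}_v$-covers of $G_v\times X_v$ both departs from the cited argument and creates difficulties the actual proof never faces.

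Those difficulties are genuine gaps. First, wideness near a vertex fiber fails as described: the condition requires $S\times\{x\}\subseteq U$ for an \emph{arbitrary} finite $S\subseteq G$, while your members over $X_v$ are coset translates of $G_v$-type sets, so nothing guarantees that a single member contains $S\times\{x\}$ when $S\not\subseteq G_v$; repairing this by fattening in the $G$-direction is exactly the multiplicity problem you explicitly defer, i.e.\ the analytic heart of the theorem is the part left unproved. Second, your ``segment-type'' regime with finite stabilizers cannot cover the points at infinity you added: an end fixed by a loxodromic element $g$ has the property that any open set accumulating on it either meets its own translates under powers of $g$ (violating the $\matheurm{F}$-subset condition $hU\cap U=\emptyset$ for $h\notin F$, and ruining multiplicity) or must be invariant under an infinite cyclic group; this is why virtually cyclic members are unavoidable in the family and why the construction of long, thin covers near the boundary --- using acylindricity quantitatively --- is the technical core of Knopf's proof. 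Third, the claim that the glued-and-compactified tree of ERs is itself a compact ER carrying the required $G$-action is substantive and is waved through as ``standard.'' As it stands the proposal is an outline whose unresolved points coincide with the actual content of the theorem, so it cannot be accepted as a proof; for this paper's purposes the correct course is simply to cite Knopf, as the authors do.
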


\begin{theorem}[\cite{Bar:RelHypFJC}]\label{Bartels}
  Suppose $G$ is hyperbolic relative to a finite collection of
  subgroups, each of which is in $\AC(\vnil)$. Then $G$ also belongs
  to $\AC(\vnil)$.
\end{theorem}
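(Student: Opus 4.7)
The plan is to extract a finitely $\mathcal{F}$-amenable action of $G$ on a compact ER, where $\mathcal{F}$ is the family of subgroups generated by $\vcyc$ together with conjugates of the peripheral subgroups $P_1,\ldots,P_k$. Since each $P_i$ lies in $\ac^n(\vnil)$ for some $n$ and $\vcyc\subseteq \vnil=\ac^0(\vnil)$, we may take a common $N$ with $\mathcal{F}\subseteq\ac^N(\vnil)$; producing such an action would then place $G$ in $\ac^{N+1}(\vnil)\subseteq\AC(\vnil)$. So the entire proof is aimed at constructing the wide cover.

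First I would replace the Cayley graph of $G$ by the Groves--Manning cusped space $X$, obtained by attaching combinatorial horoballs along the cosets of the $P_i$; this space is Gromov hyperbolic by relative hyperbolicity, the $G$-action is isometric, and the stabilizer of each horoball is conjugate to some $P_i$. Next I would build a flow space $FS(X)$ (generalized geodesics in $X$, as in Mineyev and Bartels--L\"uck--Reich), together with a natural compactification $Y=FS(X)\cup \partial_B G$ by the Bowditch boundary. The space $Y$ is a compact ER, and $G$ acts on it with parabolic stabilizers at the parabolic points of $\partial_B G$ and virtually cyclic stabilizers at conical limit points.

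The central step is then to produce, for each finite $S\subseteq G$, an $\mathcal{F}$-cover $\mathcal{U}$ of $G\times Y$ of multiplicity bounded by a constant $N$ independent of $S$, with every $S\times\{y\}$ lying in some member. Away from the horoballs one uses the long--thin covers of Bartels--L\"uck--Reich, whose construction (via flowing a fundamental domain and retracting to a Rips complex of $\partial_B G$) is essentially formal once $X$ is $\delta$-hyperbolic and $\partial_B G$ has finite topological dimension; such covers have virtually cyclic stabilizers. Near each horoball based at a coset $gP_i$, I would use the hypothesis $P_i\in\AC(\vnil)$: the horoball is $P_i$-equivariantly homeomorphic to a standard model, and one transports a wide cover of $P_i\times(\text{horoball neighborhood})$ coming from the $\ac^{n}(\vnil)$-structure on $P_i$. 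The two families of covers are then patched by choosing the horoball truncation depth so that the ``hyperbolic'' cover and the ``horoball'' covers overlap only on a thin shell; because the horoballs in the cusped space thin exponentially, this shell meets only boundedly many orbits of each kind, which controls the total multiplicity.

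The hard part is the patching and the uniform multiplicity bound. The long--thin cover's multiplicity depends on $\delta$ and on the dimension of $\partial_B G$, but when transplanted into the cusped space one needs to check that extending across horoballs does not increase the order—this requires a Lebesgue-number-style argument to choose the depth inside each horoball as a function of $S$, while keeping the horoball covers scale-compatible with the ambient cover. A secondary difficulty is verifying that $FS(X)\cup\partial_B G$ is genuinely an ER (not merely an ANR compactum); this follows, under the standing torsion-free / finiteness hypotheses implicit for the peripheral subgroups in the application, from finite-dimensionality of $\partial_B G$ and a mapping-cylinder argument that realizes $Y$ as a retract of a finite CW complex.
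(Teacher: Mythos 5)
A preliminary remark: the paper does not prove this statement at all --- it is quoted as a black box from Bartels \cite{Bar:RelHypFJC} --- so there is no internal proof to compare yours against; what you are really attempting is a reconstruction of Bartels' theorem. Your reduction is set up correctly: it suffices to produce a finitely $\mathcal{F}$-amenable action of $G$ on a compact ER, where $\mathcal{F}$ is the family generated by the virtually cyclic subgroups together with the conjugates of the peripheral subgroups; since there are finitely many peripherals and $\AC(\vnil)$ is closed under subgroups, all members of $\mathcal{F}$ lie in a common $\ac^N(\vnil)$, and the definition of the class then gives $G\in\ac^{N+1}(\vnil)$. That is exactly how (and the only place where) the hypothesis $P_i\in\AC(\vnil)$ should enter.

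The gap is in the construction of the wide covers, which is the entire content of Bartels' work, and your proposal both misassigns and defers it. First, your mechanism inside the horoballs is wrong in kind: the $\ac^{n}(\vnil)$-structure of $P_i$ provides a finitely $\mathcal{F}_i$-amenable action of $P_i$ on some unrelated compact ER; it does not produce covers of the horoball compatible with the ambient cusped-space geometry, nor covers whose members become $\mathcal{F}$-subsets for the $G$-action after induction, and it would make the multiplicity bound depend on the peripheral groups, which it must not. The whole point of putting the $P_i$ into $\mathcal{F}$ is that near a parabolic fixed point one is allowed to use large $P_i$-invariant $\mathcal{F}$-subsets with no wideness requirement in the $P_i$-direction, so no amenability-type input from the peripherals is needed or used there. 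Second, the genuinely hard step --- a uniform multiplicity bound for long--thin-type covers in the presence of horoballs, where the Bartels--L\"uck--Reich flow-space machinery breaks down because the $G$-action on the cusped space is not cocompact and geodesic flow behaves badly in horoballs --- is precisely what Bartels' coarse flow spaces were invented to handle, and your sketch explicitly postpones it (``the hard part is the patching and the uniform multiplicity bound'') rather than giving an argument. Finally, the assertion that $FS(X)\cup\partial_B G$ is a compact ER is unsupported: the flow space is not compact, and the Bowditch boundary itself is typically not even an ANR (for a free group with trivial peripheral structure it is a Cantor set), so the passage from a finitely $\mathcal{F}$-amenable action on the boundary to one on a compact ER needs a separate argument rather than an assertion. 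In short, the framing and the use of the peripheral hypothesis at the end are right, but the theorem's essential content is missing.
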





\section{One-ended groups $G$}
\label{sec:one-ended}

This section is dedicated to proving our main result for one-ended
groups using JSJ decompositions and an analysis of the action of
$G_\phi$ on the associated Bass-Serre tree.  Throughout this section,
$G$ will denote a torsion-free one-ended Gromov hyperbolic group.

\begin{proposition}\label{prop:FJC1EndedHyperbolicByCyclic}
  Let $G$ be a one-ended torsion-free hyperbolic group and
  $\Phi\in\Aut(G)$.  Then the hyperbolic-by-cyclic group $G_\Phi$ is
  in $\AC(\vnil)$.
\end{proposition}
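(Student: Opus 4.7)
The plan is to let $G_\Phi$ act on a tree with vertex stabilizers in $\AC(\vnil)$ and invoke Knopf's Theorem~\ref{Knopf}. Since $G$ is one-ended, torsion-free, and hyperbolic, it admits a canonical (Bowditch) cyclic JSJ decomposition; let $T$ be its Bass-Serre tree. Canonicality implies that $\Phi$ induces a simplicial automorphism $\tilde\Phi$ of $T$ satisfying $\tilde\Phi(gx)=\Phi(g)\tilde\Phi(x)$, so letting the $\Z$-factor of $G_\Phi=G\rtimes_\Phi\Z$ act via $\tilde\Phi$ extends the $G$-action on $T$ to a $G_\Phi$-action with finitely many edge orbits. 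By Proposition~\ref{prop:FJCInheritance}(i), it suffices to check the conclusion for a finite-index subgroup of $G_\Phi$; after replacing $\Phi$ by a suitable power we may therefore assume every vertex and edge orbit of $T$ is $\tilde\Phi$-invariant, so that the stabilizer of each vertex $v$ in $G_\Phi$ is an extension of $G_v$ by $\Z$. (The degenerate case where $G$ admits no essential cyclic splitting is disposed of directly: $\Out(G)$ is then finite by Paulin's theorem, so $G_\Phi$ is virtually $G\times\Z$, which is in $\AC(\vnil)$.)

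The Bowditch JSJ has three types of vertices---rigid, cyclic, and surface (quadratically hanging)---and we verify $\mathrm{Stab}_{G_\Phi}(v)\in\AC(\vnil)$ case by case. For a rigid $v$, $\Out(G_v)$ relative to the incident edge groups is finite, so a further power of $\Phi$ acts on $G_v$ by an inner automorphism; since $G_v$ is a centerless non-elementary hyperbolic group, $\mathrm{Stab}_{G_\Phi}(v)$ is virtually $G_v\times\Z$, which lies in $\AC(\vnil)$ because $G_v$ is hyperbolic and $\Z\in\vnil$. For a cyclic $v$, the stabilizer is virtually $\Z^2$, hence virtually nilpotent. For a surface $v$, the vertex group $G_v$ is the fundamental group of a compact surface with boundary and is therefore free, so $\mathrm{Stab}_{G_\Phi}(v)$ is free-by-$\Z$, which lies in $\AC(\vnil)$ by the free-by-cyclic case proved in \cite{BFW} (or, independently, by the arguments of later sections of the present paper).

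The crux is to verify that the $G_\Phi$-action on $T$ is acylindrical in the strict sense of Theorem~\ref{Knopf}. The $G$-action on $T$ is acylindrical by Bowditch, but the $G_\Phi$-stabilizer of a single edge contains the edge group $\langle c\rangle\cong\Z$ together with a generator $gt^n$ (where $n$ is the period of the edge orbit under $\tilde\Phi$), and so is virtually $\Z^2$. The remedy is to pass to the tree of cylinders of the JSJ (Guirardel-Levitt), which collapses commensurability classes of edge stabilizers into single vertices; the $G_\Phi$-action descends, the vertex-stabilizer analysis above goes through, and one shows that stabilizers of long arcs become bounded in cardinality by exploiting the fact that distinct commensurability classes of maximal cyclic JSJ edge groups intersect trivially and are not permuted indefinitely by $\tilde\Phi$. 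Carrying out this acylindricity step is the main obstacle; once it is in place, Theorem~\ref{Knopf} immediately yields $G_\Phi\in\AC(\vnil)$.
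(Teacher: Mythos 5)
Your skeleton is the same as the paper's (act with $G_\Phi$ on the cyclic JSJ tree, pass to a power of $\Phi$, check the three kinds of vertex stabilizers, apply Theorem~\ref{Knopf}), but the step you explicitly defer---acylindricity---is where essentially all of the paper's work lies, and the remedy you sketch does not suffice. The failure of acylindricity is not only caused by edges with commensurable stabilizers, which the tree of cylinders would cure; it is caused by elements of the form $g^{-1}t^k$. If $\Phi^k$ conjugates the vertex groups of many rigid vertices by one and the same element $g$, then $g^{-1}t^k$ fixes the entire convex hull of those vertices, and such a hull can pass through arbitrarily many distinct cylinders, so its image in the tree of cylinders still has unbounded diameter; ``distinct commensurability classes intersect trivially and are not permuted indefinitely'' does not touch this phenomenon. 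The paper deals with it head-on: it defines the subtrees $R(g)$ spanned by the rigid vertices whose stabilizers are conjugated by the same $g$, shows they contain no pA-vertices and are pairwise disjoint (this requires first rebuilding each cylinder $Z(E)$ so that hulls of equivalent $R$-vertices inside it are disjoint---Modification 3---after a folding step forcing non-commensurability of edge groups at $R$- and pA-vertices---Modification 2), and then collapses each $R(g)$ to a point (Modification 4). Only then does the case analysis of Proposition~\ref{acylindrical} (cases $\gamma=g,\ t,\ ht,\ t^k,\ ht^k$, with torsion-freeness used for $t^k$ via $g\neq h\Rightarrow g^k\neq h^k$) give a uniform bound on fixed sets. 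As written, your argument has a genuine gap at exactly this crux.

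Two smaller points. First, your QH analysis assumes the surface has boundary, so the degenerate case where $G$ is a closed surface group (JSJ a single QH vertex, tree a point, $\Out(G)$ infinite) is covered neither by your rigid degenerate case nor by your ``free-by-$\Z$'' argument; the paper's route---passing to a power so the mapping class is reducible-or-pA, refining along the invariant multicurve, and recognizing pA vertex stabilizers as finite-volume hyperbolic $3$-manifold groups, hence in $\AC(\vnil)$ by Theorem~\ref{Bartels}---handles it. Second, quoting \cite{BFW} (or the later sections of this paper) for free-by-cyclic vertex stabilizers is defensible and not actually circular, since for free vertex groups the Grushko-rank induction of the polynomial-growth section never reaches the one-ended base case, but this needs to be said; the paper sidesteps the issue entirely by the refinement just described.
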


\subsection{JSJ decompositions}
The group $G$ (after Rips and Sela
\cite{RS:JSJHyperbolic,Sela:JSJHyperbolicII}, see also \cite{bowditch,dunwoody-sageev,fujiwara-papasoglou,guirardel-levitt}) has a JSJ
decomposition, $\Lambda$, which is a finite graph of groups such that
$\pi_1(\Lambda)=G$, each edge group is cyclic, and each vertex group
is of one of the following types:
\begin{itemize}
\item cyclic,
\item quadratically hanging (or QH), or
\item rigid.
\end{itemize}

QH vertex groups are represented by compact surfaces that carry a
pseudo-Anosov homeomorphism (pA) and whose boundary components exactly
represent the incident edge groups. Rigid vertex groups are non-cyclic
quasi-convex subgroups. A degenerate case occurs when $G$ is a closed
surface group (that carries a pA); in that case the JSJ decomposition
is a single QH vertex. The key property of JSJ decompositions is that
they are ``maximal'' such decompositions, and we will need the
following manifestation of it. Let $Out_\Lambda(G)$ be the group of
``visible'' automorphisms, i.e. the subgroup of $Out(G)$ generated by
Dehn twists in edge groups and in 1-sided simple closed curves in
surfaces representing the QH vertices.

\begin{theorem}[\cite{sela}]
  $Out_\Lambda(G)$ has finite index in $Out(G)$.
\end{theorem}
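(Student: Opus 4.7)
The plan is to exploit the canonicity of the JSJ decomposition: up to slide moves and conjugations, $\Lambda$ is an invariant of $G$, so $\Out(G)$ acts on the underlying finite graph $\Lambda/G$. This yields a homomorphism $\rho : \Out(G) \to \Aut(\Lambda/G)$ whose target is a finite group of combinatorial graph automorphisms. Setting $K = \ker \rho$ gives a finite-index subgroup of $\Out(G)$ whose elements preserve, up to conjugation in $G$, every vertex group $G_v$ and every edge group $G_e$ of $\Lambda$.

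For each vertex $v$ there is an induced restriction map $K \to \Out(G_v; \mathcal{E}_v)$, where $\Out(G_v; \mathcal{E}_v) \subset \Out(G_v)$ denotes the subgroup preserving the conjugacy classes of the incident edge groups (the map is well-defined only modulo the ambiguity of the required conjugations, which is absorbed by edge Dehn twists). I would then analyze this target by vertex type. If $v$ is cyclic, $\Out(G_v; \mathcal{E}_v)$ has order at most $2$. If $v$ is rigid, $\Out(G_v; \mathcal{E}_v)$ is finite by Paulin-Rips theory applied to the defining property of rigidity: any infinite-order element would produce a further cyclic splitting of $G_v$ relative to its incident edges, contradicting the maximality of $\Lambda$. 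If $v$ is QH with surface $\Sigma_v$, then Dehn-Nielsen-Baer identifies $\Out(G_v; \mathcal{E}_v)$ with (a finite-index subgroup of) the mapping class group of $\Sigma_v$, which by Lickorish's generating theorem is generated by Dehn twists along simple closed curves on $\Sigma_v$, giving exactly the generators contributed to $\Out_\Lambda(G)$ by $v$.

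Passing to a further finite-index subgroup of $K$ to kill the finite $\Out$ contributions at cyclic and rigid vertices, every remaining element can be realized as a product of Dehn twists along edge groups and along simple closed curves in QH surfaces, modulo conjugations; a Bass-Serre tree argument then shows that the residual conjugation data is itself absorbed by edge Dehn twists, exhibiting the finite-index subgroup as a subgroup of $\Out_\Lambda(G)$.

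The main obstacle is making the first step rigorous: that $\Out(G)$ acts on $\Lambda$ with only finite ambiguity. This requires the full canonicity of the JSJ for torsion-free hyperbolic groups as developed by Rips-Sela and Bowditch, and is really where the content of the theorem lies. A secondary subtlety is the bookkeeping between the restriction maps to vertex groups and the conjugations picked up from basepoint choices in the Bass-Serre tree; this is typically organized via an exact sequence relating $\Out(G)$, the relative outer automorphism groups of the vertex groups, and the twist subgroup generated by edge Dehn twists.
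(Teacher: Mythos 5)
The paper does not prove this statement at all: it is quoted as a black box from Sela (with the special case of groups with no $\Z$-splittings attributed to Paulin), so there is no internal argument to measure your proposal against; the relevant comparison is with Sela's (and Levitt's) proof, and your outline does follow that standard route: a finite-index subgroup $K\le Out(G)$ preserving the JSJ, restriction maps from $K$ to relative outer automorphism groups of vertex groups, finiteness at cyclic and rigid vertices, mapping class groups generated by twists at QH vertices, and an exact sequence identifying the kernel of the restrictions with the group generated by edge twists.

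As written, however, this is a plan rather than a proof, and the steps you defer are exactly where the content lies. First, ``$\Lambda$ is an invariant of $G$ up to slide moves and conjugation'' does not by itself give a homomorphism $\rho\colon Out(G)\to \Aut(\Lambda/G)$, since slide moves change the underlying graph; you need a genuinely $Out(G)$-invariant object (Bowditch's canonical JSJ tree, or the tree of cylinders of the JSJ deformation space), and you acknowledge but do not resolve this. Second, finiteness of $Out(G_v;\mathcal{E}_v)$ for rigid $v$ is itself a substantial theorem (the Bestvina--Paulin limiting argument plus Rips theory, in the version relative to the incident edge groups, combined with the maximality of the JSJ to rule out the resulting splitting); moreover your reduction is phrased in terms of a single infinite-order element, whereas the correct statement is that an infinite sequence of distinct elements of $Out(G_v;\mathcal{E}_v)$ yields an action on an $\mathbb{R}$-tree and hence a splitting --- finiteness does not follow merely from the absence of infinite-order elements. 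Third, the final assertion that ``the residual conjugation data is absorbed by edge Dehn twists'' is precisely the content of the exact sequence for automorphisms of a graph of groups (Levitt), which you invoke by name but do not establish. Finally, a small point: the QH surfaces may be non-orientable, where Dehn twists generate only a finite-index subgroup of the mapping class group (Lickorish's theorem in the form you cite is the orientable case); this is harmless for a finite-index conclusion but should be said, and it is why the paper's description of the generators of $Out_\Lambda(G)$ mentions one-sided curves. None of these are wrong turns --- the ingredients are the right ones --- but the proposal leaves the theorem's actual substance (canonicity and maximality of the JSJ, finiteness at rigid vertices, and the structure of the kernel) unproved.
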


An important special case, when $G$ has no splittings over $\Z$, was
proved earlier by Paulin \cite{paulin}.

If we let $T$ be the Bass-Serre tree associated with $\Lambda$, then
for every Dehn twist above, and hence for every element $\phi\in
Out_\Lambda(G)$, and for every lift $\Phi\in Aut(G)$, there is an
isometry $t:T\to T$ which is $\Phi$-equivariant. It follows that the
group $G_\Phi$ also acts by isometries on the same tree $T$. This
action may not be acylindrical, and our proof consists of modifying
the JSJ decomposition in order to construct an acylindrical action and
apply Theorem \ref{Knopf}. All our modifications will have the
property that $t$ and $G_\Phi$ continue to act by isometries, and we
retain the name $t$.

Since $G_{\phi^k}$ has index $k$ in $G_\phi$ when $k>0$, inheritance to finite
index supergroups implies that we are free to replace $\phi$ by a power
when proving Theorem \ref{thm:main}.
To begin with, we
are assuming, by passing to a power, that
\begin{itemize}
  \item $\phi\in Out_\Lambda(G)$, and
    \item the restriction of $\phi$ to
any QH-vertex group is represented by a mapping class of the punctured
surface that fixes a (possibly empty) multicurve and in each
complementary component it is either identity or pseudo-Anosov.
\end{itemize}
\vskip 0.5cm
{\bf Modification 1.} We refine $\Lambda$ by replacing each QH-vertex
group with the graph of groups dual to the reducing multicurve in the
corresponding surface. The resulting graph of groups representing $G$
will be called $\Lambda_1$. Thus the number of edges of $\Lambda_1$ is
equal to the number of edges of $\Lambda$ plus the sum of the numbers
of curves in reducing multicurves in the QH vertex groups, and each QH
vertex group is replaced by a collection of vertex groups, one for
each complementary component of the reducing multicurve. It will now
be convenient not to distinguish between rigid vertex groups and
noncyclic vertex groups coming from complementary components where $\phi$ is
identity. We will use the following classification of vertex groups in
$\Lambda_1$. Let $\Phi\in Aut(G)$ be a representative of $\phi\in Out(G)$.
\begin{enumerate}
  \item [(i)] $R$-vertices: associated groups are noncyclic, and the
    restriction of $\Phi$ is a conjugation by an element of $G$,
    \item [(ii)] $pA$-vertices: these are represented by a surface with edge
      groups corresponding exactly to the boundary components, and the
      restriction of $\Phi$ is represented by a pseudo-Anosov mapping
      class, composed with conjugation,
      \item [(iii)] $Z$-vertices: these are cyclic, and the restriction of
        $\Phi$ acts by conjugation.
\end{enumerate}

Let $T_1$ be the Bass-Serre tree corresponding to $\Lambda_1$.
\vskip 0.5cm
{\bf Modification 2.} Here we arrange that distinct edges incident to
the same $R$-vertex in $T_1$ have non-commensurable stabilizers. Note that this
is already true for $pA$-vertices. Given an $R$-vertex, consider the
set of incident edges whose stabilizers are in a fixed
commensurability class and fold together thirds of these edges
incident to the vertex (only a third is to make sure that if folding from
both ends there is no intersection). Do this for all commensurability
classes. The new vertices created by folding are declared to be
$Z$-vertices. Call the resulting tree $T_2$. The edge stabilizers are still
cyclic, and (i)-(iii) still hold. In addition we have

\begin{enumerate}
  \item [(iv)] Distinct edges incident to a vertex of type $R$ or $pA$
    have non-com\-men\-su\-rable stabilizers.
\end{enumerate}

Before the next modification, we take a closer look at the
$Z$-vertices.

\begin{lemma}
  Let $E$ be a maximal cyclic subgroup of $G$. The set of edges of
  $T_2$ whose stabilizer is a subgroup of $E$ is finite and the union
  of these edges is a subtree $Z(E)$ of $T_2$. All vertices of $Z(E)$
  of valence $>1$ are of $Z$-type and 
  the tree $Z(E)$ intersects the closure of
  $T_2\smallsetminus Z(E)$ in the set of vertices of $R$- and
  $pA$-type, all of which are of valence 1 in $Z(E)$.
\end{lemma}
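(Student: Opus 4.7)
The plan is to verify the four assertions---connectedness, finiteness, the type of interior vertices of $Z(E)$, and the type of its boundary vertices---in turn, using property (iv) together with the fact that in a torsion-free hyperbolic group the centralizer of any nontrivial element is its unique maximal cyclic supergroup. In particular $N_G(E)=E$: an element $h$ normalizing $E$ conjugates a generator $g$ of $E$ to $g^{\pm 1}$, and the $g^{-1}$ case would force $\langle h,E\rangle$ to be virtually cyclic, hence cyclic by torsion-freeness, contradicting maximality of $E$.

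For connectedness, given $e_1,e_2\in Z(E)$, their stabilizers are nontrivial subgroups of $E\cong \Z$, hence intersect in a subgroup of finite index. Any nontrivial element $g$ of this intersection fixes both edges, so by convexity of fixed-sets in trees it fixes the whole segment $[e_1,e_2]$. Every edge of this segment then has cyclic stabilizer containing $g$; since the centralizer of $g$ is the maximal cyclic supergroup $E$, each such stabilizer lies in $E$, giving $[e_1,e_2]\subset Z(E)$.

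For finiteness, I would argue that $G$-orbits and $E$-orbits of edges in $Z(E)$ coincide: if $he=e'$ with $\mathrm{Stab}(e),\mathrm{Stab}(e')\le E$, then $h$ conjugates a nontrivial element of $E$ into $E$, so commensurability combined with maximality give $hEh^{-1}=E$, hence $h\in N_G(E)=E$. Since the quotient graph of groups corresponding to $T_2$ is finite, $Z(E)$ has only finitely many $G$-orbits of edges, hence only finitely many $E$-orbits; and each $E$-orbit is finite because each edge stabilizer has finite index in $E\cong\Z$. Thus $Z(E)$ has finitely many edges.

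For the vertex-type assertions, any vertex $v\in Z(E)$ with two incident edges in $Z(E)$ has two commensurable cyclic edge stabilizers (both finite-index subgroups of $E$), so by (iv) it cannot be of type $R$ or $pA$, hence it is a $Z$-vertex. Conversely, suppose $v$ is a $Z$-vertex of $Z(E)$ with vertex group $\langle z\rangle$, and let $e\in Z(E)$ be an incident edge with stabilizer $\langle z^k\rangle\le E$; then $z^k\in E$ forces $z\in E$ by maximality, so every edge incident to $v$ has stabilizer in $\langle z\rangle\le E$ and hence lies in $Z(E)$. Therefore $Z$-vertices of $Z(E)$ are interior, and vertices of $Z(E)$ meeting $\overline{T_2\setminus Z(E)}$ must be of type $R$ or $pA$, which by the preceding paragraph have valence $1$ in $Z(E)$. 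The main obstacle is the finiteness claim, where the computation $N_G(E)=E$ and the identification of $G$-orbits with $E$-orbits are the substantive ingredients; the remaining assertions follow fairly directly from property (iv).
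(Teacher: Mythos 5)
Your proof is correct, and for the finiteness assertion it takes a genuinely different route from the paper. The paper proves finiteness by bounding the diameter of $Z(E)$ by $2N$, where $N$ is the number of edges of $T_2/G$: a reduced path in $Z(E)$ with more than $2N$ edges would contain two coherently oriented edges in the same $G$-orbit, producing a hyperbolic isometry $g$ of $T_2$ with $gEg^{-1}\cap E\neq 1$, contradicting malnormality of maximal cyclic subgroups; together with the observation that valence $>1$ vertices are of $Z$-type and hence of finite valence in $Z(E)$, this yields finiteness. You instead show $N_G(E)=E$ and that any $h\in G$ carrying one edge of $Z(E)$ to another must lie in $E$, so the $E$-orbits of edges of $Z(E)$ inject into the finitely many $G$-orbits of edges of $T_2$, and each $E$-orbit is finite since edge stabilizers are nontrivial, hence of finite index in $E\cong\Z$. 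Both arguments hinge on the same underlying fact (uniqueness of the maximal cyclic subgroup containing a given nontrivial element, equivalently malnormality of $E$); the paper's pigeonhole gives an explicit diameter bound (not used later, since Modification 3 replaces each $Z(E)$ by a tree of diameter $\leq 4$), while your orbit count bounds the edge set directly and is arguably more transparent. Your connectedness and vertex-type arguments coincide with the paper's, and you make explicit a step the paper leaves implicit, namely that $Z$-vertices of $Z(E)$ are interior. Two small points, both also glossed over in the paper: your counting uses that all edge stabilizers of $T_2$ are nontrivial (infinite cyclic), which holds because $G$ is one-ended; and for the equality of $Z(E)\cap\overline{T_2\smallsetminus Z(E)}$ with the set of $R$- and $pA$-vertices one should also note that such a vertex does meet the closure of the complement, since its non-cyclic stabilizer forces infinitely many incident edges, only one of which lies in $Z(E)$ by (iv).
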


\begin{proof}
  If a nontrivial subgroup of $E$ fixes two edges in an edge-path,
  then it fixes all edges between them, so the claim that $Z(E)$ is a
  tree follows. Any vertex of $Z(E)$ of valence $>1$ must be of
  $Z$-type by (iv) and therefore of finite valence. It remains to show
  that $Z(E)$ has finite diameter. We will show that it does not
  exceed $2N$, where $N$ is the number of edges in the quotient graph
  $\Lambda_2=T_2/G$. 
  Indeed, assume there is a reduced edge path of $2N+1$ edges in
  $Z(E)$. Then there are two oriented edges $a,b$ that map to the same edge in
  $\Lambda_2$ with the same orientation, i.e. there is $g\in G$ with
  $g(a)=b$ and so that $a$ points to $b$ while $b$ points away from
  $a$. This implies that $g$ is a hyperbolic isometry of $T_2$, and it
  conjugates $E$ to $E$. Since maximal cyclic subgroups of $G$ are
  malnormal and $g\notin E$ this is a contradiction.
\end{proof}

We say that two $R$-vertices are {\it equivalent} if $\Phi:G\to G$
conjugates each of them by the same $g\in G$ (which is unique since
the $R$-vertex groups are not cyclic). Note that the action of $G$
preserves the equivalence classes, as does the $\Phi$-equivariant
isometry $t:T_2\to T_2$ induced by $t:T\to T$. 
\vskip 0.5cm
{\bf Modification 3.} Here we arrange that in each $Z(E)$
the convex hulls of equivalent $R$-vertices are pairwise disjoint. Say
$V$ is the set of $R$-type vertices in $Z(E)$ and $W$ is the set of
$pA$-type vertices in $Z(E)$. All of them have valence 1. Write
$V=V_1\sqcup V_2\sqcup \cdots\sqcup V_k$ by
grouping equivalent vertices into one $V_i$. Remove the interior
$Z(E)\smallsetminus (V\cup W)$ of $Z(E)$. Then for each $V_i$ introduce a vertex
$v_i$ and an edge joining $v_i$ to each vertex in $V_i$. Finally
introduce a new vertex $v$ and an edge joining $v$ to each $v_i$ and
to each $pA$-type vertex. Perform the same modification on all trees
$Z(E)$, so $G$ continues to act. The stabilizer of the vertex $v$
above is $E$ and the edge stabilizers are subgroups of $E$. All of the
newly introduced vertices are of type $Z$. Call the resulting tree
$T_3$. We retain the previous notation, so now in addition to (i)-(iv)
we also have
\begin{enumerate}
  \item [(v)] $Z(E)$ is a tree of
diameter $\leq 4$ and the convex hulls of equivalent $R$-vertices in
$Z(E)$ are pairwise disjoint.
\end{enumerate}

Before the final modification, we make some observations. First,
$\Phi$ takes the stabilizers of vertex and edge groups of $T_3$ to
other such stabilizers and induces an isometry $t:T_3\to T_3$, which is
type preserving. Thus $$G_\Phi=\langle G,t\mid tgt^{-1}=\Phi(g)\rangle$$ acts on
$T_3$. This action may not be acylindrical; for example the edges
whose stabilizer is fixed by $\Phi$ will be fixed by $t$, so we have
to ensure that such edges form a bounded set.

For $g\in G$ let $R(g)$ be the convex hull of the set of type $R$
vertices whose stabilizers are conjugated by $g$.

\begin{lemma}
  There are no $pA$-vertices in $R(g)$ and all $R$-vertices in $R(g)$
  are equivalent, so $\Phi$ conjugates all vertex and edge groups in
  $R(g)$ by $g$. For $g\neq g'$ the trees $R(g)$ and $R(g')$ are
  disjoint. 
\end{lemma}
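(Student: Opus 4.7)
The plan is to identify $R(g)$ with a subset of the fixed subtree $F := \mathrm{Fix}(g^{-1}t)$ of the isometry $g^{-1}t$ acting on $T_3$. For any $R$-vertex $v$ conjugated by $g$ one has $\Phi(\mathrm{Stab}(v)) = g\,\mathrm{Stab}(v)\,g^{-1} = \mathrm{Stab}(gv)$, and since non-cyclic $R$-vertex stabilizers determine their vertex in $T_3$, this will force $t(v) = gv$, so $g^{-1}t$ fixes $v$. Fixed sets of tree isometries are convex, so $R(g) \subseteq F$. Consequently, for every vertex or edge $u \in R(g)$ the element $g^{-1}t$ fixes $u$, giving $\Phi(\mathrm{Stab}(u)) = g\,\mathrm{Stab}(u)\,g^{-1}$ setwise.

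This setwise identity, combined with (i), will give the claim that all $R$-vertices in $R(g)$ are equivalent: for $u$ an $R$-vertex of $R(g)$, (i) yields $\Phi|_{\mathrm{Stab}(u)} = \mathrm{conj}_h$ for a unique $h \in G$---uniqueness because non-cyclic subgroups of torsion-free hyperbolic groups have trivial centralizer---and the setwise identity forces $h = g$. After passing to a power of $\Phi$ to absorb the inversion ambiguity on cyclic groups, the same uniqueness argument upgrades the setwise identity to $\Phi|_{\mathrm{Stab}(u)} = \mathrm{conj}_g$ on every edge and $Z$-vertex stabilizer in $R(g)$, giving the ``conjugates all vertex and edge groups by $g$'' clause for all non-$pA$ types.

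The core of the proof is to rule out $pA$-vertices in $R(g)$. Suppose, for contradiction, $w \in R(g)$ is a $pA$-vertex. Leaves of the convex hull $R(g)$ are $R$-vertices, so $w$ is interior to $R(g)$ and has at least two incident edges $e_1, e_2 \subseteq R(g)$. By property (iv) their cyclic stabilizers are generated by elements $c_1, c_2$ corresponding to distinct, non-commensurable boundary components of the surface representing $\mathrm{Stab}(w)$; hence $\langle c_1, c_2\rangle$ is free of rank two. Writing $\Phi|_{\mathrm{Stab}(w)} = \mathrm{conj}_g \circ \Psi$ with $\Psi$ of pseudo-Anosov outer class as in (ii), and combining with $\Phi(c_i) = g c_i g^{-1}$ from the edge case already settled, one deduces $\Psi(c_1) = c_1$ and $\Psi(c_2) = c_2$, so the fixed subgroup of $\Psi$ contains the rank-two free subgroup $\langle c_1, c_2\rangle$. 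This contradicts the fact that the fixed subgroup of any pseudo-Anosov outer automorphism of a surface group has rank at most one. I expect this pseudo-Anosov fixed-subgroup bound to be the main obstacle; the rest is formal manipulation of the tree machinery.

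For the disjointness, a common vertex $u \in R(g) \cap R(g')$ satisfies $g^{-1}g' = (g^{-1}t)(g'^{-1}t)^{-1} \in \mathrm{Stab}_G(u)$. By (a), the intersection contains no $pA$-vertices. If $u$ is an $R$-vertex then $\mathrm{conj}_g = \mathrm{conj}_{g'}$ on the non-cyclic $\mathrm{Stab}(u)$ forces $g = g'$ by triviality of the centralizer. The remaining $Z$-vertex case can be excluded by tracing $R(g)$ and $R(g')$ out to their $R$-vertex leaves and invoking the separation of inequivalent $R$-vertex hulls arranged in Modification 3, again producing the contradiction $g = g'$.
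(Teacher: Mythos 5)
Your overall architecture is close to the paper's: you normalize by passing from $t$ to $g^{-1}t$ (the paper instead rechooses $\Phi$ so that $g=1$), you exclude $pA$-vertices via the fact that an automorphism representing a pseudo-Anosov class has fixed subgroup of rank at most one (this is essentially the paper's parenthetical argument), and you get disjointness from trivial centralizers together with property (v). However, there is a genuine gap at the central step. For an interior $R$-vertex $u$ of $R(g)$, the setwise identity $\Phi(\Stab(u))=g\Stab(u)g^{-1}$ does \emph{not} force the conjugator to be $g$: writing $\Phi|_{\Stab(u)}=\mathrm{conj}_h$, the identity only gives $g^{-1}h\in N_G(\Stab(u))=\Stab(u)$, i.e.\ $\Phi|_{\Stab(u)}=\mathrm{conj}_{gv_0}$ for some $v_0\in\Stab(u)$, and nothing in your argument rules out $v_0\neq 1$. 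The uniqueness of $h$ (trivial centralizer of a noncyclic subgroup) shows $h$ is well defined, not that it equals $g$. Likewise, for the cyclic edge and $Z$-vertex stabilizers the setwise identity only yields $\Phi(c)=gc^{\pm1}g^{-1}$ with no control at all on the conjugator, and ``passing to a power of $\Phi$'' is not available: the lemma must hold for the specific $\Phi$ used to build $T_3$ and to define the equivalence relation on $R$-vertices (powers are treated separately, in Case 4 of the acylindricity proof). Since your $pA$-exclusion invokes ``the edge case already settled'' to get $\Phi(c_i)=gc_ig^{-1}$, the gap propagates into that step as well.

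The paper closes exactly this hole by a propagation argument along the path $[v,w]$ between two genuinely equivalent $R$-vertices, whose groups (after normalizing $g=1$) are fixed elementwise: the edge group at a leaf is a subgroup of the leaf group, hence fixed elementwise; a cyclic interior vertex group contains such an edge group with finite index and is therefore itself fixed elementwise; and at an interior $R$-vertex the conjugator $h$ must centralize (resp.\ normalize) the two incident edge groups along the path, which are non-commensurable by property (iv), so malnormality of maximal cyclic subgroups in a torsion-free hyperbolic group forces $h=1$. The $pA$-exclusion then uses this elementwise control of the incident boundary subgroups, not merely setwise invariance. If you replace your ``setwise identity forces $h=g$'' step (and the power trick) by this propagation using (iv), your argument becomes essentially the paper's proof.
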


\begin{proof}
  We may assume $g=1$ by rechoosing $\Phi\in Aut(G)$.
  Let $V,W$ be two $R$-vertex groups fixed by $\Phi$ elementwise and consider
  the edge-path $[v,w]$ joining the two vertices $v,w$. Thus $t$ fixes $[v,w]$
  and $\Phi$ leaves all vertex groups along $[v,w]$ invariant. Let $U$
  be such a vertex group corresponding to a vertex $u\in [v,w]$,
  $u\neq v,w$. If $U$ is cyclic, then a finite index
  subgroup appears as an edge group, and so it is fixed elementwise by
  $\Phi$. If $U$ is of $R$-type, $\Phi|U$ is conjugation by some
  element $h\in G$, but since the two incident edge groups along
  $[v,w]$ are non-commensurable and are both fixed by $\Phi$, we must
  have $h=1$, i.e. $U$ is fixed elementwise. By a similar argument,
  $U$ cannot be of $pA$-type (if $\Phi$ fixes the cyclic group
  corresponding to one boundary component, it must nontrivially
  conjugate all the others).

  Now suppose that $g'\neq 1$ and consider $R(1)\cap R(g')$. This
  intersection is a tree, and every vertex in it must be of
  $Z$-type. It follows from (v) that the intersection is empty.
\end{proof}
\vskip 0.5cm
{\bf Modification 4.} Collapse all $R(g)$'s to get a tree $T_4$. Since
$t:T_3\to T_3$ sends $R(g)$ to $R(\Phi(g))$,
it induces an isometry, still called $t:T_4\to T_4$. Thus $G_\Phi$
acts on $T_4$, and the new vertices corresponding to $R(g)$ are of $R$-type.

\begin{proposition}\label{acylindrical}
  All vertex and
  edge stabilizers of the action of $G_\Phi$ on $T_4$ are in
  $\AC(\vnil)$, and the action is acylindrical. 
\end{proposition}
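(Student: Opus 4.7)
The plan is to split the statement into two halves: first, to identify each vertex and edge stabilizer of the $G_\Phi$-action on $T_4$ and check that it lies in $\AC(\vnil)$; second, to verify acylindricity using the diameter bound on $Z(E)$ and the disjointness properties arranged by Modifications 3 and 4.

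For vertex stabilizers I would handle the three types separately. At a $Z$-vertex the $G$-stabilizer is a maximal cyclic subgroup $E\le G$; since $\Phi$ preserves $E$ up to conjugation and acts on $\mathbb{Z}$ as $\pm 1$, the $G_\Phi$-stabilizer is virtually $\mathbb{Z}^2$, hence virtually nilpotent. At a $pA$-vertex the $G$-stabilizer $S$ is a surface group carrying the induced pseudo-Anosov class; after replacing $\Phi$ within its outer class and adjusting $t$ correspondingly, the $G_\Phi$-stabilizer becomes $S\rtimes\mathbb{Z}$ with pseudo-Anosov monodromy, which by Thurston's hyperbolization is the fundamental group of a (possibly cusped) hyperbolic $3$-manifold, hence relatively hyperbolic with $\mathbb{Z}^2$ peripherals and so in $\AC(\vnil)$ by Theorem \ref{Bartels}. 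At a new $R$-vertex arising from collapsing $R(g)$, the lemma preceding Modification 4 yields an element $g^{-1}t \in G_\Phi$ that acts trivially on $R(g)$ and commutes with the $G$-stabilizer $H$ of $R(g)$; hence the $G_\Phi$-stabilizer factors as $H\times\mathbb{Z}$. The subgroup $H$ is the fundamental group of a finite graph of quasi-convex hyperbolic vertex groups with cyclic edge groups inside the hyperbolic ambient $G$, and by a standard combination-theorem argument $H$ is itself hyperbolic, so $H\times\mathbb{Z}$ lies in $\AC(\vnil)$ by Proposition \ref{prop:FJCInheritance}. Finally, edges of $T_4$ correspond to uncollapsed edges of $T_3$ with cyclic $G$-stabilizer, so their $G_\Phi$-stabilizers are again virtually $\mathbb{Z}^2$.

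For acylindricity I would fix a constant $D$ depending only on $\Lambda_2$ and show that any $\gamma\in G_\Phi$ fixing an edge-path of length $\ge D$ in $T_4$ is forced into a finite subgroup. Writing $\gamma = ht^n$, the case $n=0$ reduces to the $G$-action on $T_3$: the element $h$ lies in the intersection of the cyclic stabilizers along a long path, and using the $Z(E)$ diameter bound $\le 4$ together with the disjointness of equivalent $R$-vertex convex hulls in $Z(E)$ from (v), this forces $h=1$ once $D$ is large enough. For $n\ne 0$, I would exploit that $ht^n$ sends each fixed edge to an edge in the same $G$-orbit and permutes the collapsed subtrees via $g\mapsto\Phi^n(g)$; combined with the disjointness $R(g)\cap R(g')=\emptyset$ for $g\ne g'$ from the final lemma, this again prevents long fixed paths.

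The principal obstacle will be the $n\ne 0$ case of acylindricity: one must combine the $Z(E)$ diameter bound with a careful tracking of how $\Phi^n$ permutes the labels of $R(g)$-subtrees, in order to rule out long fixed paths produced by twisted elements $ht^n$. A secondary technical step is the hyperbolicity of the subgroup $H$ attached to a collapsed $R$-vertex; in this torsion-free hyperbolic setting it follows from combination-theorem / quasi-convexity arguments, but it should be spelled out explicitly.
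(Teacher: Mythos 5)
Your treatment of the stabilizers is essentially the paper's argument and is fine: $Z$-vertices and edges give (virtually) $\Z\times\Z$, $pA$-vertices give fibered hyperbolic $3$-manifold groups handled by Theorem \ref{Bartels}, and a collapsed $R(g)$-vertex gives $H\times\Z$ because $g^{-1}t$ centralizes the $G$-stabilizer. The only cosmetic difference is that the paper gets hyperbolicity of $H$ by observing that vertex groups of the minimal $G$-tree $T_4$ with cyclic edge stabilizers are quasi-convex in $G$, rather than by a combination-theorem argument; either route works.

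The acylindricity half, however, has a genuine gap, and it is exactly the part you flag as ``the principal obstacle'': the elements $\gamma=ht^n$ with $n\neq 0$. Your sketch (``$ht^n$ permutes the collapsed subtrees via $g\mapsto\Phi^n(g)$, and disjointness of the $R(g)$'s prevents long fixed paths'') does not by itself yield a uniform diameter bound, because the edges fixed by $ht^n$ are those whose cyclic stabilizers are conjugated by $h^{-1}$ under $\Phi^n$, and nothing in your plan controls how these edges are distributed in $T_4$. The paper's proof supplies two missing ingredients. First, for $\gamma=t$ itself it bounds $Fix(t)$ directly: an interior vertex lying between two fixed edges in distinct $Z(E)$'s would, by the non-commensurability condition (iv), be an $R$- or $pA$-vertex whose stabilizer is fixed elementwise by $\Phi$ (impossible at a $pA$-vertex, and at an $R$-vertex it means the vertex lies in $R(1)$); hence $Fix(t)$ is contained in a single $Z(E)$ (diameter $\leq 4$) or, after collapsing, in the $4$-neighborhood of the vertex $R(1)$ (diameter $\leq 8$). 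Second, for $\gamma=ht^k$ it observes that $h\Phi^k h^{-1}$ is just another representative of $\phi^k$, and that rerunning Modifications 1--4 for $\phi^k$ produces the \emph{same} tree $T_4$ --- in particular the equivalence relation on $R$-vertices is unchanged, which is where torsion-freeness (uniqueness of conjugators, $g\neq h\Rightarrow g^k\neq h^k$) enters substantially --- so the $t$-case bound applies verbatim. Without this invariance of the construction under powers and conjugate representatives, together with the $Fix(t)$ analysis, the bound you need for $n\neq 0$ is not established, and that case is the heart of the proposition.
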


\begin{proof}
  The edge stabilizers are $\Z\times\Z$, hence in $\AC(\vnil)$. The
  $G$-tree $T_4$ is minimal with cyclic edge stabilizers. It follows
  that vertex groups are finitely generated and quasi-convex in $G$,
  hence hyperbolic. There are 3 types of vertices in the $G_\Phi$-tree
  $T_4$, corresponding to the vertex types of the $G$-tree. If $v$ is
  a $Z$-type vertex in the $G$-tree, the corresponding vertex
  stabilizer in $G_\Phi$ is $\Z\times\Z$. If $v$ is an $R$-type vertex
  with stabilizer $V$, the corresponding stabilizer in $G_\Phi$ is
  $V\times\Z$, which belongs to $\AC(\vnil)$ by Proposition
  \ref{prop:FJCInheritance}(i).  If $v$ is of $pA$-type, the
  stabilizer in $G_\phi$ is the semi-direct product $V\rtimes_\phi\Z$,
  which belongs to $\AC(\vnil)$ (it is the fundamental group of a
  finite volume hyperbolic 3-manifold, so the statement follows from
  Theorem \ref{Bartels}, see also
  Roushon \cite{Roushon:FJC3Manifolds}).

  It remains to argue that the action of $G_\Phi$ is acylindrical,
  that is, we need a uniform bound on the diameter of $Fix(\gamma)$
  for every nontrivial $\gamma\in G_\Phi$. We consider different cases.

  {\it Case 1: $\gamma=g\in G$.} Then $Fix(\gamma)$ is empty, a point,
  or contained in
  $Z(E)$, where $E$ is the maximal cyclic subgroup of $G$ that
  contains $\gamma$, so it has diameter $\leq 4$ by (v).

  {\it Case 2: $\gamma=t$.} Then $Fix(\gamma)$, if not a single
  vertex, is the union 
  of edges whose stabilizer is fixed by $\Phi$. If $R(1)=\emptyset$,
  i.e. if $T_3$ has no $R$-vertices whose stabilizer is fixed by
  $\Phi$, then $Fix(t)$ is contained in a single $Z(E)$ so has
  diameter $\leq 4$ (otherwise the vertex in the intersection of two
  adjacent $Z(E)$'s containing edges of $Fix(t)$ would be an
  $R$-vertex whose stabilizer is elementwise fixed by
  $\Phi$). Otherwise, by the same argument, every such edge is
  contained in some $Z(E)$ that contains an $R$-vertex in $R(1)$, and
  so in $T_4$ the set $Fix(t)$ is contained in the 4-neighborhood of
  the vertex $R(1)$, so it has diameter $\leq 8$.

  {\it Case 3: $\gamma=ht$ for $h\in G$.} Then $\gamma$ fixes an edge
  $e$ if and only if $\Phi(g)=h^{-1}gh$ for $g\in Stab_G(e)$. In other
  words, if we replace $\Phi$ by the conjugate $h\Phi h^{-1}$ this
  case reduces to Case 2.

  {\it Case 4: $\gamma=t^k$ for $k\neq 0$.} If we replace $\Phi$ with
  $\Phi^k$ and repeat the modifications 1-4 to the JSJ-tree, the
  resulting tree is exactly the same. For example, the equivalence
  relation on the set of $R$-vertices doesn't change, since $g\neq h$
  implies $g^k\neq h^k$ in torsion-free hyperbolic groups. (This is one
  place where torsion-free hypothesis is used in a substantial way.)
  Thus $Fix(t^k)$ has diameter $\leq 8$.

  {\it Case 5: $\gamma=ht^k$ for $k\neq 0$, $h\in G$.} This is the
  general case, and it follows by combining Cases 3 and 4, that is,
  applying the argument to a different representative of $\phi^k$.
\end{proof}

\begin{proof}[Proof of Proposition
    \ref{prop:FJC1EndedHyperbolicByCyclic}]
  Follows from Theorem \ref{Knopf} applied to $G_\phi$ acting on
  $T_4$, using Proposition \ref{acylindrical}.
\end{proof}

We also record the following consequence, which may be of independent
interest. 

\begin{corollary}\label{fixed}
  Let $G$ be a 1-ended torsion-free hyperbolic group and $\phi\in
  Out(G)$. Then there is a power $\phi^N$, $N>0$, so that for every
  lift $\Phi\in Aut(G)$ of $\phi^N$ every $\Phi$-periodic element of
  $G$ is fixed (i.e. $\Phi^k(g)=g$, $k>0$ imply $\Phi(g)=g$).
\end{corollary}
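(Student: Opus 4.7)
The plan is to reformulate the periodicity condition as a commutation relation inside $G_\Phi$ and exploit the acylindrical action on $T_4$ from Proposition \ref{acylindrical}. After passing to a suitable power of $\phi$, we may assume that the Modifications 1--4 of Section \ref{sec:one-ended} have been carried out; fix a lift $\Phi$. The relation $\Phi^k(g) = g$ is equivalent to $[t^k, g] = 1$ in $G_\Phi$, so the set $\{m \in \Z : [t^m, g] = 1\}$ is a subgroup $m_0(g)\Z$ of $\Z$. The corollary reduces to bounding $m_0(g)$ uniformly over all periodic $g \in G\setminus\{1\}$; then $N$ can be taken to be any common multiple of these bounds.

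Suppose first that $g$ is hyperbolic on $T_4$ with axis $A_g$. Commutation with $t^{m_0(g)}$ forces $t^{m_0(g)}$ to preserve $A_g$; since the $G_\Phi$-action on $T_4$ is acylindrical, an elliptic isometry cannot pointwise fix the unbounded line $A_g$, so $t^{m_0(g)}$ is hyperbolic with axis $A_g$, and therefore $t$ itself is hyperbolic with $A_t = A_g$. Both $t$ and $g$ act on $A_g$ by translations and so commute on $A_g$, whence $[t,g]$ lies in the pointwise stabilizer of $A_g$ in $G_\Phi$. Acylindricity combined with the torsion-freeness of $G_\Phi$ forces this pointwise stabilizer to be trivial, so $[t,g]=1$ and $m_0(g) = 1$.

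Now suppose $g$ is elliptic, fixing a vertex $v$ of $T_4$, and split by the type of $v$. At a $Z$-vertex with stabilizer $E \cong \Z$, the restriction $\Phi|_E$ is a conjugation by some element normalizing $E$; since a torsion-free hyperbolic group contains no infinite dihedral subgroup, this conjugation must act trivially, and $m_0(g) = 1$. At an $R$-vertex with stabilizer $V$, $\Phi|_V$ is conjugation by some $g_v \in G$, and the equation $\Phi^k(g) = g$ reads $[g, g_v^k] = 1$; in a torsion-free hyperbolic group the centralizer of a nontrivial element coincides with that of any of its nonzero powers (both being the unique maximal cyclic subgroup containing it), so $[g, g_v] = 1$ and again $m_0(g) = 1$. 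At a $pA$-vertex, $V$ is a surface group and $\Phi|_V = \mathrm{inn}(g_v)\circ\alpha$ with $\alpha$ representing a pseudo-Anosov mapping class; here I would appeal to Nielsen--Thurston theory, noting that the only $\alpha$-periodic conjugacy classes in $V$ are peripheral and that a further bounded power $\alpha^{N_\alpha}$ preserves each boundary subgroup of $V$ and acts on it as the identity. A $\Phi$-periodic $g$ in this setting is conjugate into an incident edge group, where the residual conjugation factor is disposed of as in the $R$-case, bounding $m_0(g)$ by a constant depending only on the surface.

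Since $T_4$ has finitely many $G$-orbits of vertices, the bounds above involve only finitely many parameters, and their least common multiple gives the desired $N$. The conclusion is independent of the chosen lift of $\phi^N$ because the condition $[t^k, g] = 1$ depends only on the group $G_\Phi$ and the coset $tG$; replacing $t$ by $ht$ for $h \in G$ does not alter the analysis. I expect the $pA$-vertex case to be the principal obstacle, as it requires distilling from Nielsen--Thurston theory a statement about specific lifts of pseudo-Anosovs to $\mathrm{Aut}(V)$ rather than merely the induced outer automorphism.
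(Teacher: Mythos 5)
Your overall strategy --- run the argument on the tree $T_4$ and exploit the acylindricity of Proposition \ref{acylindrical} --- is the same as the paper's, and your hyperbolic case is essentially the paper's (the paper in fact derives a contradiction there, but the acylindricity mechanism is identical). The elliptic cases, however, contain genuine errors as written. In the $R$-vertex case you assert that $\Phi^k(g)=g$ ``reads $[g,g_v^k]=1$''; this presumes $\Phi^k|_V$ is conjugation by $g_v^k$, which is false in general: if $\Phi|_V$ is conjugation by $g_v$, then $\Phi^k|_V$ is conjugation by the twisted product $\Phi^{k-1}(g_v)\cdots\Phi(g_v)\,g_v$, and this is $g_v^k$ only when $g_v$ is itself $\Phi$-fixed, which you have not established. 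The centralizer-of-powers argument therefore does not apply, and the same defect propagates into your peripheral reduction at $pA$-vertices. Likewise, in the $Z$-vertex case you assume the conjugating element normalizes $E$; but $\Phi(E)$ is the stabilizer of the image vertex $t(v)$, which need not be $E$, so the ``no infinite dihedral subgroup'' step has nothing to act on. The paper's elliptic case instead uses the structure built into $T_4$ (the bounded trees $Z(E)$ and the collapsed vertices $R(h)$, for which the conjugator is a single well-defined element), and for a nonperipheral element of a $pA$-vertex group it simply observes that a pseudo-Anosov has no nonperipheral periodic conjugacy classes --- no further power is needed at that stage.

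There is also a structural problem with your reduction. The corollary quantifies over \emph{every} lift of $\phi^N$; such lifts correspond to elements $ht^N\in G_\phi$ with $h\in G$ arbitrary, and these are in general not powers of any single lift of $\phi$. So bounding $m_0(g)=\min\{m>0:[t^m,g]=1\}$ for one fixed $t$ and then taking a common multiple (and, in the $pA$ case, a ``further bounded power $\alpha^{N_\alpha}$'') does not yield the statement: the periodicity hypothesis for an arbitrary lift is $[(ht^N)^j,g]=1$, i.e.\ $[b\,t^{Nj},g]=1$ for some $b\in G$ depending on $h$ and $\Phi$, which your $m_0$-analysis never addresses; your closing remark that the condition ``depends only on the coset $tG$'' is not correct as stated. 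The paper avoids this: $N$ is fixed once and for all by Modifications 1--4, and the argument is then carried out directly for an arbitrary lift $\Phi$ of $\phi^N$ (equivalently an arbitrary element $ht^N$ acting on $T_4$), using that acylindricity and the fact that $\Phi$ restricts to conjugations on the relevant stabilizers are unaffected by changing the lift by an inner automorphism.
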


\begin{proof}
  The power $\phi^N$ is the one we take in order to make the
  Modifications 1-4 above. We may assume $g\neq 1$, and consider
  $T_4$, where we set $t$ to be the isometry induced by $\Phi$. If $g$
  is hyperbolic then $t^k$ leaves the axis of $g$ invariant. If $t^k$
  is elliptic, it fixes the whole axis contradicting acylindricity. If
  $t$ and $t^k$ are loxodromic, then
  for
  suitable $m,s\neq 0$ the isometries $(t^k)^m$ and $g^s$ have the
  same action on this axis. This implies that $t^{km}g^{-s}$ fixes the
  axis, again contradicting
  acylindricity. Thus $g$ is elliptic. If $g$ fixes an edge or a
  vertex of the form $R(h)$, then $\Phi$ acts on $g$ by conjugation,
  so either $g$ is fixed or it is not periodic (this uses that $G$ is
  torsion-free). If $g$ fixes a $pA$ vertex but no edges, then $g$ is
  represented by a nonperipheral curve in a surface and $\phi^N$ acts
  as a pseudo-Anosov. Thus even the conjugacy class of $g$ is
  non-periodic.
\end{proof}

\section{The PG case ($\infty$-ly many ends)}
\label{sec:PG-autos}

We continue with the standing assumption that $G$ is a torsion-free
Gromov hyperbolic group.  The goal of the present section is the
following proposition, whose proof closely mirrors the analogous
result in the free group case, treated in \cite{BFW}.

\begin{proposition}\label{prop:PGFJC}
  Let $G$ be a torsion-free hyperbolic group with infinitely many ends
  and assume $\phi\in\Out(G)$ has polynomial growth.  
  Then the group $G_\phi$ belongs to
  $\AC(\vnil)$.
\end{proposition}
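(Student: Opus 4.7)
The plan is to reduce Proposition \ref{prop:PGFJC} to the one-ended case (Proposition \ref{prop:FJC1EndedHyperbolicByCyclic}) via an induction on a $\phi$-invariant filtration by free factor systems, in direct analogy with the free-group argument of \cite{BFW}. By Stallings' theorem, $G$ admits a Grushko decomposition $G=G_1*\cdots*G_k*F_N$ in which each $G_i$ is a one-ended torsion-free hyperbolic group; the associated free factor system $\mathcal{F}=\{[G_1],\ldots,[G_k]\}$ is canonical up to permutation of its elements. After replacing $\phi$ by a power---harmless by the finite-index inheritance in Proposition \ref{prop:FJCInheritance}(i)---we may assume $\phi\in\Out(G,\mathcal{F})$ preserves each $[G_i]$, giving well-defined PG restrictions $\phi_i\in\Out(G_i)$.

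Next, invoke the technical structure result deferred to the last section: after passing to a further power, $\phi$ admits a $\phi$-invariant filtration
\[
\mathcal{F}\sqsubseteq\mathcal{F}_1\sqsubset\cdots\sqsubset\mathcal{F}_n=\{[G]\}
\]
of free factor systems such that each one-step extension $\mathcal{F}_{j-1}\sqsubset\mathcal{F}_j$ is encoded by a Bass-Serre tree with cyclic edge groups on which the induced automorphism acts as a relative Dehn twist. This is the hyperbolic-in-the-Grushko-factors analogue of the upper-triangular normal form used in \cite{BFW}. With it, one proves $G_\phi\in\AC(\vnil)$ by induction on $n$. The base case follows from Proposition \ref{prop:FJC1EndedHyperbolicByCyclic} applied to each $G_{i,\phi_i}$, combined with the $F_N$ case from \cite{BFW}. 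At the inductive step for level $j$, form the Bass-Serre tree $T_j$ of the one-edge refinement and extend the $G$-action to a $G_\phi$-action via the $\Phi$-equivariant isometry coming from $\phi$-invariance of the splitting. Edge stabilizers in $G_\phi$ are $\Z\times\Z$ (cyclic edge group crossed with twist direction), and vertex stabilizers are mapping tori at lower filtration level, hence in $\AC(\vnil)$ by induction. Acylindricity is verified by a case analysis on $\gamma\in G_\phi$ bounding $\mathrm{Fix}(\gamma)$, precisely as in Proposition \ref{acylindrical}, after which Knopf's Theorem \ref{Knopf} closes the step.

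The main obstacle is the filtration step itself: producing the right ``upper triangular'' Dehn-twist decomposition for PG elements of $\Out(G,\mathcal{F})$. In the free case this uses relative train tracks and CTs; the hyperbolic analogue must also manage the one-ended Grushko factors, where Corollary \ref{fixed} is essential for ensuring that periodic elements in rigid vertex groups can be taken fixed on the nose, so that twistors and edge generators commute cleanly. A secondary difficulty is verifying acylindricity at every level of the filtration simultaneously, since fixed sets of individual twistors can be large and the bounds require combining arguments across several filtration levels in the spirit of the Case 3--5 analysis of Proposition \ref{acylindrical}.
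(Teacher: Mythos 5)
Your overall skeleton (induct on the Grushko/filtration structure of a relative train track representative, split off the top stratum, split the mapping torus accordingly, invoke Knopf) is the same as the paper's, but the proposal misidentifies the key structures and, more importantly, does not actually deal with the point where the argument is delicate. First, the structure theory: for a polynomially growing $\phi$ the relevant normal form is the Collins--Turner relative train track on a Grushko tree, whose strata are \emph{free} splittings (trivial edge stabilizers), with the top edge mapping over itself once plus lower strata; it is not a filtration whose one-step extensions are cyclic splittings acted on by relative Dehn twists. Consequently, when you form the mapping torus and split $G_\phi=H_1*_{\Z}H_2$ (or the HNN analogue), the edge groups of the resulting Bass--Serre tree are conjugates of the stable letter $\langle t\rangle$, not $\Z\times\Z$; your ``cyclic edge group crossed with twist direction'' description belongs to the one-ended JSJ argument of Section~\ref{sec:one-ended}, not to this case. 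Relatedly, the technical result deferred to the last section is not a filtration theorem at all: it is the neatness theorem (Theorem~\ref{neat}), asserting that after a power every lift $\Phi$ has the property that $\Phi$-periodic elements are fixed.

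Second, and this is the genuine gap: you assert that acylindricity of the $G_\phi$-action ``is verified by a case analysis precisely as in Proposition~\ref{acylindrical},'' but that case analysis depends on the specific JSJ modifications (the trees $Z(E)$ of diameter $\le 4$, collapsing the $R(g)$'s) which have no counterpart here, and in fact the action of $G_\phi$ on the Bass--Serre tree $T$ of $H_1*_\Z H_2$ is typically \emph{not} acylindrical: whenever $\Phi(g)=g$, the conjugates $g\langle t\rangle g^{-1}$ and $\langle t\rangle$ coincide and one gets arbitrarily long segments of edges with the same stabilizer. The paper's proof handles this by (a) using neatness to prove the dichotomy of Lemma~\ref{lem:EdgeStabilizers} (two edge stabilizers are equal or meet trivially), (b) passing from $T$ to the skeleton $S$ of the transverse covering by the subtrees $T_E$ (the tree of cylinders), whose action \emph{is} acylindrical, and (c) identifying the new vertex stabilizers as $\Fix(\Phi)\times\Z$, which lie in $\AC(\vnil)$ because $\Fix(\Phi)$ is quasiconvex, hence hyperbolic, by Neumann's theorem. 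None of these steps appears in your proposal, and without them the inductive step does not close: you have neither an acylindrical tree nor control of the stabilizers that the cylinder construction creates. To repair the argument you would need to add exactly these ingredients (or a substitute for them), with the neatness statement proved separately, e.g.\ by the induction over the tree carried out in Section~\ref{s:neat} with base case Corollary~\ref{fixed}.
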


\subsection{Polynomial growth}
We start by reviewing the notion of polynomial growth in this setting;
for more details see \cite{DK:RelHypHypAutos}.

Let $G=G_1*G_2*\cdots*G_n*F_r$ be the Grushko free product
decomposition of $G$, where the $G_i$ are 1-ended hyperbolic groups
and $F_r$ is the free group of rank $r$ (we allow $n=0$ or $r=0$). The
{\it Grushko rank} of $G$ is $n+r$.
By
a {\it Grushko tree} we mean a minimal simplicial $G$-tree where the
edge stabilizers are trivial and the vertex stabilizers are precisely
the conjugates of the $G_i$ (and possibly the trivial group). Fix a
Grushko tree $T$. When $g\in G$ define $$||g||=\min d_T(v,g(v))$$ This
is a conjugacy invariant and it is a measure of the length of a
conjugacy class relative to the vertex stabilizers.  Let $\phi\in
Out(G)$. Then $\phi$ permutes the vertex stabilizers of any Grushko
tree. We say that $g\in G$ {\it grows polynomially} under $\phi$ if there
exists a polynomial $P_g$ such that $$||\phi^n(g)||\leq P_g(n)$$
We say that $\phi$ has {\it
  polynomial growth} if every $g\in G$ grows polynomially. These
definitions do not depend on the choice of the Grushko tree.

In \cite{collins-turner}, generalizing the case of free groups in
\cite{BH:TrainTracks}, Collins and Turner proved the following train
track theorem. For refinements and generalizations see
\cite{feighn-handel,francaviglia-martino,lyman}.

\begin{theorem}[\cite{collins-turner}]\label{CT}
  Let $G$ be as above and let $\phi\in Out(G)$ be a polynomially growing
  automorphism and $\Phi$ a lift of $\phi$ to $Aut(G)$. Then there is
  a Grushko tree $T$ and a $\Phi$-equivariant map $f:T\to T$ that
  sends vertices to vertices and such that
  \begin{enumerate}
  \item [(1)] there is a $G$-invariant filtration
    $$T_0\subset T_1\subset\cdots\subset T_{k-1}\subset T_k=T$$
    of $T$ into subgraphs (these are forests and need not be
    connected), with $T_0$ consisting of all vertices with nontrivial
    stabilizer, and
    \item [(2)] for every edge $e\in T_j$, $j=1,2,\cdots,k$, $f(e)$ is
      an edge-path that crosses exactly one edge in $T_j\smallsetminus
      T_{j-1}$ while all the other edges are in $T_{j-1}$.
  \end{enumerate}
\end{theorem}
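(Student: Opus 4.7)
The plan is to adapt the Bestvina--Handel train track algorithm \cite{BH:TrainTracks} to the setting of Grushko trees for free products of hyperbolic groups, treating each vertex stabilizer $G_i$ as an indecomposable black box into which one is not allowed to fold.

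First I would start with any Grushko tree $T$ and construct a $\Phi$-equivariant simplicial map $f\colon T\to T$ sending vertices to vertices and edges to tight edge paths. Such an $f$ exists because $\Phi$ permutes the $G$-conjugacy classes of the factors $G_i$, which pins $f$ down on the vertex set with nontrivial stabilizer up to equivariance, after which $f$ is extended across edges. Attached to $(T,f)$ is a transition matrix $M$ whose $(i,j)$-entry counts the number of edges of orbit $i$ appearing in $f(e_j)$. The crucial link with polynomial growth is that $||\phi^n(g)||$ is comparable to the total edge-length of $f^n$ applied to a loop in $T/G$ representing the conjugacy class of $g$, which grows like the $n$-th power of the Perron--Frobenius eigenvalue of $M$ times a polynomial. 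Thus polynomial growth of $\phi$ forces the spectral radius of $M$ to equal $1$, and after replacing $\phi$ by a suitable power we may further arrange that every eigenvalue of $M$ equals $1$, i.e.\ that $M$ is unipotent.

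Next I would put a complexity on pairs $(T,f)$ --- say the pair (Perron--Frobenius eigenvalue, number of edge orbits) under lexicographic order --- and carry out Bestvina--Handel style moves (subdividing, folding, tightening, collapsing invariant subforests) to minimize it. The admissible moves here are those that respect the Grushko structure: one may fold edges meeting at a common vertex of trivial stabilizer and one may collapse $G$-invariant subforests disjoint from $T_0$, but one may never identify two vertices carrying nontrivially-stabilized orbits. At each stage either the complexity strictly drops, or it stays the same while a secondary structural invariant improves, so the process terminates at a minimizer $(T,f)$.

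Finally, with a minimal pair in hand whose transition matrix $M$ is unipotent, I would order the edge orbits so that $M$ becomes lower-triangular with $1$s on the diagonal. The resulting flag of $M$-invariant coordinate subspaces, pulled back to $G$-invariant subforests of $T$, supplies the filtration $T_0\subset T_1\subset\cdots\subset T_k=T$, and by construction each edge of $T_j\smallsetminus T_{j-1}$ is mapped by $f$ across exactly one edge of $T_j\smallsetminus T_{j-1}$ with all other edges lying in $T_{j-1}$. The main obstacle is the passage from the purely algebraic flag to a geometric filtration that is simultaneously $G$-invariant, simplicially realized, and preserved by $f$; in particular, one must argue that the chosen sequence of Bestvina--Handel moves preserves unipotence of $M$ and terminates. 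This combinatorial bookkeeping, constrained by the restricted set of folds imposed by the nontrivial vertex groups, is where the bulk of the Collins--Turner argument lies.
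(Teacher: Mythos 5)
Theorem \ref{CT} is not proved in the paper at all: it is imported wholesale from Collins--Turner \cite{collins-turner} (with refinements cited from \cite{feighn-handel,francaviglia-martino,lyman}), so there is no internal argument to compare yours against. Your sketch does follow the same general strategy as that source --- adapt the Bestvina--Handel relative train track machinery to a Grushko tree, with folds and collapses constrained so as never to disturb the vertices with nontrivial stabilizer --- but as it stands it is an outline of that proof rather than a proof, and two of its inferences are not correct. First, the claim that polynomial growth of $\phi$ forces the spectral radius of the transition matrix $M$ of an \emph{arbitrary} initial representative to equal $1$ is false: $\|\phi^n(g)\|$ is measured after tightening, so the transition matrix only bounds growth from above, and a polynomially growing automorphism can easily admit representatives with Perron--Frobenius eigenvalue $>1$. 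Reading growth off $M$ is legitimate only once the (relative) train track property controls cancellation --- i.e.\ only after the theorem one is trying to prove --- so your argument, in the order you give it, is circular. The actual logic is the reverse: one proves the relative train track theorem for all automorphisms (allowing exponential, permutation and zero strata) and then observes that exponential strata force exponential growth of some conjugacy class, hence are absent when $\phi$ is polynomially growing; this is exactly the remark the paper makes immediately after the statement.

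Second, you pass to a power of $\phi$ to make $M$ unipotent. Theorem \ref{CT} is stated for $\phi$ itself; no power is allowed, and accordingly its conclusion only asks that each edge of a stratum map over exactly one edge of that stratum (diagonal blocks are permutation matrices, not the identity). The power-taking step is precisely the content of Proposition \ref{CT2}, which is a separate, later statement, so proving the theorem only up to powers proves the wrong thing. Finally, the genuinely hard parts --- that the constrained sequence of subdivisions, folds, tightenings and collapses terminates, that it preserves the Grushko structure and equivariance, and that zero strata can be dealt with --- are asserted rather than argued; you acknowledge this yourself in the last sentence. Since that bookkeeping is the bulk of Collins--Turner, the proposal should be regarded as a correct identification of the known strategy with the key analytic step (growth detection) misstated, not as an independent proof.
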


It is useful to visualize what this is saying about the quotient graph
of groups $T/G$. There is a filtration by subgraphs starting with the
vertices representing the $G_i$'s, all these subgraphs are invariant
under the induced map $\overline f:T/G\to T/G$, and the image of each
edge in the stratum $T_{j}/G\smallsetminus T_{j-1}/G$ crosses one edge
of that stratum and the rest of the image is in $T_{j-1}/G$. In other words,
the transition matrix of $\overline f$ is an upper triangular block
matrix with all diagonal blocks permutation matrices. 

The Collins-Turner theorem is more general and applies to all
automorphisms, in general there are also exponentially growing and
zero strata, but they don't arise in the polynomially growing case.

Since we are allowed to take powers of $\phi$, the statement becomes a
bit simpler. The permutation matrices can be taken to be $1\times 1$.

\begin{proposition}\label{CT2}
  Let $G$ be as above and $\phi\in Out(G)$ polynomially growing. Then
  there is a power $\phi^N$ such that for every lift $\Phi$ of
  $\phi^N$ the following holds.
  \begin{enumerate}
    \item [(i)] Up to conjugacy, $\Phi$ preserves each $G_i$. 
      \item [(ii)] There is a Grushko tree $T$ and $f:T\to T$
        satisfying the conclusions of Theorem \ref{CT} for $\Phi$ such that, in
        addition, each stratum $T_j\smallsetminus T_{j-1}$ consists of
        exactly one orbit of (oriented) edges, and $f$ sends each such
        edge to an edge-path that crosses an edge of
        $T_j\smallsetminus T_{j-1}$ with the same orientation.
              \end{enumerate}
\end{proposition}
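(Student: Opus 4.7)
The plan is to start with the Collins--Turner train track provided by Theorem~\ref{CT} and pass to progressively higher powers of $\phi$ to clean up the filtration. Fix any lift $\Phi_0\in\Aut(G)$ of $\phi$ and apply Theorem~\ref{CT} to obtain a Grushko tree $T$, a $\Phi_0$-equivariant simplicial map $f\colon T\to T$, and an invariant filtration $T_0\subset T_1\subset\cdots\subset T_k=T$; the map induced by $f$ on the finite set of $G$-orbits of edges of each stratum $T_j\smallsetminus T_{j-1}$ is a permutation $\sigma_j$.

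For statement~(i), every automorphism of $G$ permutes the finite collection $\{[G_1],\ldots,[G_n]\}$ of conjugacy classes of non-trivial vertex stabilizers, since these are exactly the conjugacy classes of maximal one-ended subgroups of $G$ and hence intrinsic. Let $\pi$ be the permutation induced by $\phi$. For any positive integer $N$ divisible by $|\pi|$, the outer automorphism $\phi^N$ fixes each $[G_i]$, so for every lift $\Phi$ of $\phi^N$ the subgroup $\Phi(G_i)$ is a $G$-conjugate of $G_i$, which is (i).

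For statement~(ii), I next check that each iterate $f^M$ is $\Phi_0^M$-equivariant and continues to satisfy the Collins--Turner conclusion with the \emph{same} filtration: induction on $M$ using $f(T_{j-1})\subseteq T_{j-1}$ shows that for every $e\in T_j$ the image $f^M(e)$ crosses $T_j\smallsetminus T_{j-1}$ exactly once, and this unique crossing lies in the orbit $\sigma_j^M(O)$, where $O$ is the $G$-orbit of $e$. Choosing $M$ to be a common multiple of the orders of $\sigma_1,\ldots,\sigma_k$ makes every $\sigma_j^M$ trivial. I then refine the filtration: within each original stratum, totally order the finitely many $G$-orbits of edges and insert one new filtration level per orbit, also adjoining the endpoints of the inserted edges so each level is a $G$-invariant subgraph. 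Each new stratum is now a single $G$-orbit of edges, and the Collins--Turner property persists because $f^M$ sends an edge of such an orbit to a path whose unique stratum-$j$ crossing lies in that same orbit, while the remaining edges lie in the old $T_{j-1}$, hence in the smaller refined strata.

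It remains to secure the orientation condition. On each single-orbit stratum, $f^M$ sends an oriented representative edge $e$ to a path whose unique crossing of the orbit has the form $g\cdot e^{\epsilon}$ for some $g\in G$ and $\epsilon\in\{\pm 1\}$ depending only on the stratum. A direct computation in the quotient graph, exactly as in the free-group case treated in \cite{BFW}, shows that replacing $f^M$ by $f^{2M}$ replaces each $\epsilon$ by $\epsilon^2=+1$. Setting $N=2M\cdot|\pi|$ therefore delivers all three conclusions of (ii) for the distinguished lift $\Phi_0^N$; for an arbitrary lift $\Phi=c_h\circ\Phi_0^N$ of $\phi^N$, the map $h\cdot f^{2M}$ is $\Phi$-equivariant and inherits the desired properties, since they are phrased purely in $G$-orbit and quotient-graph terms that are unaffected by left translation by $h$. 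The main (minor) obstacle is the bookkeeping in the refinement step: verifying that the refined subgraphs remain $G$-invariant subgraphs and that the Collins--Turner property survives the refinement, both of which follow from a direct tracking of which edges and endpoints belong to which stratum.
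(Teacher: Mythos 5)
Your argument is correct and matches the paper's (essentially unproved, one-remark) treatment: the paper likewise obtains Proposition~\ref{CT2} from Theorem~\ref{CT} by passing to a power so that the diagonal permutation blocks of the transition matrix become trivial $1\times 1$ blocks, which is exactly your iteration-plus-refinement argument, with the squaring trick handling orientation and translation by $h$ handling arbitrary lifts. Only a trivial bookkeeping slip: with $N=2M|\pi|$ the equivariant map you should carry along is $f^{N}$ (and $h\cdot f^{N}$ for the lift $c_h\circ\Phi_0^N$), not $f^{2M}$, and its properties follow just as you argued since $M\mid N$ and $N$ is even.
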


We say that an automorphism $\Phi:G\to G$ is {\it neat} if
$\Phi^k(g)=g$ for $k\neq 0, g\in G$ implies $\Phi(g)=g$. An outer
automorphism $\phi\in Out(G)$ is {\it neat} if every lift of $\phi$ to
$Aut(G)$ is neat.

We shall need the following theorem, whose proof is deferred to
Section \ref{s:neat}.

\begin{theorem}\label{neat}
  Let $G$ be a torsion free hyperbolic group and let $\phi\in Out(G)$
  be a polynomially growing outer automorphism (with respect to the
  Grushko tree). Then there is $N>0$ such that $\phi^N$ is neat.
\end{theorem}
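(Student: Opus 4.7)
The plan is to combine the train-track structure of Proposition~\ref{CT2} with Corollary~\ref{fixed} applied to each one-ended Grushko factor, and then bootstrap from conjugacy-class neatness to element-level neatness using the $G$-action on the Grushko tree $T$.

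First I replace $\phi$ by a power so that simultaneously the conclusions of Proposition~\ref{CT2} hold, giving a Grushko tree $T$ and a $\Phi$-equivariant train-track map $f\colon T\to T$ with single-orbit strata, and, by Corollary~\ref{fixed} applied to each one-ended Grushko factor $G_i$, the induced outer class $\phi_i\in\Out(G_i)$ is neat. Since the lowest stratum $T_1\setminus T_0$ has no lower strata available, Proposition~\ref{CT2}(ii) forces the induced map $\bar f$ on $T/G$ to fix each edge of the lowest stratum.

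With this setup I establish conjugacy-class neatness $\phi^k([g])=[g]\Rightarrow\phi([g])=[g]$. For $g$ hyperbolic in $T$, represent $[g]$ by a tight cyclic path $\gamma\subset T/G$; the train-track property gives $|\bar f(\gamma)|=\sum_{e\in\gamma}|\bar f(e)|\geq|\gamma|$ with equality iff every edge of $\gamma$ lies in the lowest stratum, so $\bar f^k(\gamma)=\gamma$ forces $\gamma$ into the lowest stratum and then $\bar f(\gamma)=\gamma$. For elliptic $g$, $g$ lies in a conjugate of some $G_i$ and the statement follows from the class-level consequence of the neatness of $\phi_i$.

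Finally I upgrade to the element level. Given a lift $\tilde\Phi$ of $\phi^N$ with $\tilde\Phi^k(g)=g$, class-level neatness gives $\tilde\Phi(g)=hgh^{-1}$ for some $h\in G$, and the goal is $h\in Z_G(g)=\langle c\rangle$. For $g$ hyperbolic with axis $A_g$ in the lowest stratum, the equivariant map $f$ sends $A_g$ to $hA_g$; using that $f$ fixes lowest-stratum edges, one argues $f(A_g)=A_g$, which forces $h$ into the setwise stabilizer of $A_g$, namely $\langle c\rangle$ since edge stabilizers in $T$ are trivial. For $g$ elliptic at a vertex $v$, uniqueness of the fixed vertex gives $\tilde F^k(v)=v$, and using $N_G(G_v)=G_v$ reduces the problem to an element-level neatness question inside $G_v$ resolved by the neatness of $\phi_v$. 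The main obstacle is the element-level step for elliptic $g$ under a lift $\tilde\Phi$ not setwise preserving $G_v$: the restriction of $\tilde\Phi$ to $G_v$ is only defined up to a cocycle involving the displacement $\tilde y\in G$ with $\tilde F(v)=\tilde y\,v$, and extracting $h\in\langle c\rangle$ requires combining the cocycle identity from $\tilde\Phi^k(g)=g$ with the neatness of $\phi_v$ and the root-extraction property of maximal cyclic subgroups in torsion-free hyperbolic $G$.
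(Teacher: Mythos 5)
Your proposal has two genuine gaps, one of which you flag yourself. First, the class-level step is not correct as written. For a relative train track map, cancellation between the images of adjacent edges is only \emph{bounded}, not absent, so for a tight cyclic path $\gamma$ you cannot assert $|\bar f(\gamma)|=\sum_{e\subset\gamma}|\bar f(e)|$, and the dichotomy ``equality iff $\gamma$ lies in the lowest stratum'' also fails, since a higher-stratum edge may map to a single edge. Concretely, for $F_2=\langle a,b\rangle$ with $a\mapsto a$, $b\mapsto ba$ (already of the form in Proposition \ref{CT2}), the conjugacy class of $bab^{-1}a^{-1}$ is fixed even though it crosses the top stratum, and the unreduced image has length $6$ versus $4$; so $\bar f^{k}(\gamma)=\gamma$ does not force $\gamma$ into the lowest stratum. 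This also undermines your element-level argument for hyperbolic $g$, which assumed the axis is carried by the lowest stratum. Second, and more seriously, the step you label ``the main obstacle''--- upgrading from $\tilde\Phi(g)=hgh^{-1}$ to $\tilde\Phi(g)=g$ for elliptic $g$ when the lift does not preserve $G_v$, i.e.\ resolving the cocycle coming from the displacement of $v$ --- is precisely the crux of Theorem \ref{neat} (recall neatness of $\phi^N$ quantifies over \emph{all} lifts), and it is left unresolved; as it stands the proposal does not prove the statement.

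The paper's proof is structured exactly so that this cocycle never appears. It inducts on Grushko rank, using only the single topmost edge of the filtration from Proposition \ref{CT2}: collapsing everything else gives a $G$-tree with trivial edge stabilizers on which the chosen lift is realized by an isometry $t$ (Proposition \ref{step}), and the vertex groups are free products of smaller Grushko rank to which the inductive hypothesis (every lift of the restricted outer class is neat; base case Corollary \ref{fixed}) applies. If $t$ is loxodromic one compares axes and translation lengths; if $t$ is elliptic, Lemma \ref{periodic edges are fixed} gives $\mathrm{Fix}(t^n)=\mathrm{Fix}(t)$, so any $\Phi^n$-fixed elliptic $g$ fixes a vertex that is also fixed by $t$, where $\Phi$ restricts to an \emph{honest} automorphism of the vertex group and neatness is applied directly. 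To salvage your approach you would essentially have to reprove this inductive one-edge step; working with the whole Grushko tree at once, with only the one-ended factors as input (via Corollary \ref{fixed}), does not give you enough control over lifts at the vertices.
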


The proof will show that $N$ depends only on $G$, not on $\phi$.

\begin{proof}[Proof of Proposition \ref{prop:PGFJC}]
  We will induct on the Grushko rank. The base of the induction is
  Proposition \ref{prop:FJC1EndedHyperbolicByCyclic}. Replace $\phi$
  by a power
  $\phi^N$ so that a lift $\Phi$ of $\phi^N$ is realized as in
  Proposition 
  \ref{CT2}, and so that $\Phi$ is neat. The highest edge in the
  filtration defines a splitting of $G$ as $G_1*G_2$ or as $G'*_1$,
  with $G_1,G_2,G'$ of strictly smaller Grushko rank and $\phi$
  inducing polynomially growing outer automorphisms on these
  factors, so by induction their mapping tori are in $\AC(\vnil)$.

We will treat the case where $G=G_1*G_2$; the argument requires only
notational changes
when $G$ is an HNN extension.  We can geometrically realize
$\phi$ as follows.  For $i\in\{1,2\}$, let $X_i$ be a presentation
complex for $G_i$: that is a 2-dimensional CW complex with
$\pi_1(X_i)=G_i$.  The automorphism $\phi|_{G_i}$ then tells us how to
build a homotopy equivalence $f_i\colon X_i\to X_i$ such that
$(f_i)_*=\phi|_{G_i}$.  Attaching $X_1$ and $X_2$ with an edge $e$
yields a graph of spaces $X$ with $\pi_1(X)=G$.  
We
can realize $\phi$ as a homotopy equivalence $f\colon X\to X$
restricting to $f_i$ on $X_i$ and such that $f(e)=\alpha_1 e \alpha_2$
for $\alpha_i$ a path in $X_i$. In particular, $f^{-1}(e)\subset e$,
so after perturbing $f$ by a homotopy supported on $e$, we may assume
there is an interval $J\subset e$ fixed by $f$ and such that
$f^{-1}(J)=J$.

We now form the topological mapping torus
$X_f\defeq X\times [0,1]/(x,0)\sim (f(x),1)$, whose fundamental group
is $G_\phi$.  In $X_f$, the interval $J$ gives rise to an embedded
annulus, $J\times S^1$, and hence (by Van Kampen's Theorem) to a
splitting of $\pi_1(X_f)=G_\phi$ over an infinite cyclic group:
$G_\phi=H_1\ast_\ZZ H_2$.  Moreover, $H_i$ is precisely the mapping
torus of $\phi|_{G_i}\colon G_i\to G_i$, and is therefore in
$\AC(\vnil)$, by induction.

Let $T$ be the Bass-Serre tree associated to this splitting. It is
not necessarily the case that $G_\phi\curvearrowright T$ is
acylindrical.  
\begin{lemma}\label{lem:EdgeStabilizers}
  If $E,E'$ are two edges in $T$ then
  either $\Stab(E)=\Stab(E')$ or $\Stab(E)\cap\Stab(E')=1$.
\end{lemma}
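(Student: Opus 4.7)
The plan is to use the semidirect product structure $G_\phi = G \rtimes_\Phi \Z$ combined with the neatness of $\Phi$ (Theorem~\ref{neat}, which applies because at the start of the proof of Proposition~\ref{prop:PGFJC} we already passed to a power of $\phi$ that makes every lift neat).

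First I would identify the edge stabilizers of $T$. The splitting $G_\phi = H_1 *_\Z H_2$ comes from the embedded annulus $J \times S^1 \subset X_f$, where $J \subset e$ is pointwise fixed by $f$. The fundamental group of this annulus, based at a point of $J \times \{0\}$, is generated by the loop traversing the $S^1$-factor once, which represents the stable letter $t \in G_\phi$. After adjusting $\Phi$ by an inner automorphism coming from a path in $X$ connecting the basepoint of $X$ to a point of $J$ (which does not change the outer class $\phi$), we may assume the edge group is exactly $\langle t \rangle$; every other edge stabilizer of $T$ is therefore a $G_\phi$-conjugate of $\langle t \rangle$.

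Next, suppose $\gamma \in \Stab(E) \cap \Stab(E')$ is nontrivial. After conjugating by an element of $G_\phi$, I would reduce to the case $\Stab(E) = \langle t \rangle$ and $\Stab(E') = h\langle t \rangle h^{-1}$ for some $h \in G_\phi$. Writing $h = gt^k$ with $g \in G$ and $k \in \Z$, we have $\gamma = t^n$ for some $n \neq 0$, and the condition $\gamma \in h\langle t\rangle h^{-1}$ forces $h^{-1} t^n h = t^m$ for some $m \neq 0$. Using the defining relation $tgt^{-1} = \Phi(g)$, a direct computation in $G \rtimes_\Phi \Z$ yields
\[
h^{-1} t^n h \;=\; \Phi^{-k}\bigl(g^{-1}\Phi^n(g)\bigr)\, t^n,
\]
and comparing normal forms forces $m = n$ together with $\Phi^n(g) = g$.

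Now I would invoke neatness: since $\Phi$ is neat, $\Phi^n(g) = g$ with $n \neq 0$ implies $\Phi(g) = g$. Hence $g$ commutes with $t$, so $h = gt^k$ commutes with $t$, and therefore $h\langle t\rangle h^{-1} = \langle t \rangle$, giving $\Stab(E') = \Stab(E)$. The main obstacle is precisely this appeal to neatness: a priori the calculation only shows that $\Stab(E)$ and $\Stab(E')$ are commensurable cyclic subgroups, and Theorem~\ref{neat} is exactly what is needed to upgrade commensurability to equality in the torsion-free polynomially growing setting.
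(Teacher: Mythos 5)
Your proof is correct and is essentially the paper's argument: both identify the edge stabilizers as conjugates of $\langle t\rangle$, use the normal form $gt^k$ in $G\rtimes_\Phi\Z$ to show a nontrivial intersection forces $\Phi^n(g)=g$, and then invoke neatness (Theorem \ref{neat}) to get $\Phi(g)=g$ and hence equality of stabilizers. The only cosmetic difference is that you conjugate $t^n$ by a general element $h=gt^k$, while the paper computes the $n$-th power of $gtg^{-1}=g\Phi(g^{-1})t$ and compares exponents via the homomorphism to $\Z$.
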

\begin{proof}
  Consider the standard presentation,
  \begin{equation*}
    G_\Phi=\langle x_1,\ldots,x_n,t\mid tx_it^{-1}=\Phi(x_i)\rangle.
  \end{equation*}
  where $x_1,\ldots, x_n$ is a generating set for $G$.  We recall
  that every element $h\in G_\Phi$ can be written uniquely as
  $h=gt^k$ for some integer $k$ and some element $g\in G$.
  Now the fundamental group of the embedded annulus, $\pi_1(J\times
  S^1)$, is identified in $G_\Phi$ with the stable letter, $t$.
  Evidently the edge stabilizers in $T$ are precisely the conjugates
  of $\langle t\rangle$, $gtg^{-1}=g\Phi(g^{-1})t$. 
  Thus if
  $\Stab(E)\cap\Stab(E')\neq 1$, then for some $n>0$ we
  have $$(g\Phi(g^{-1})t)^n=t^n$$ (the exponents must be equal by
  looking at the homomorphism to $\Z$ that kills the $x_i$'s). By a
  straightforward induction the left-hand side can be written as
  $g\Phi^n(g^{-1})t^n$ so the equation says $\Phi^n(g)=g$. By the
  neatness of $\Phi$ we have $\Phi(g)=g$, so the equation holds for
  $n=1$, i.e. $Stab(E)=Stab(E')$.
\end{proof}

Consider an action $G\curvearrowright T$ on a simplicial tree by
isometries. Recall that a \emph{transverse covering} \cite[Definition
4.6]{G:LimitGroups} of $T$ is a $G$-invariant family $\mathcal{Y}$ of
non-degenerate closed subtrees of $T$ such that any two distinct
subtrees in $\mathcal{Y}$ intersect in at most one point.  A
transverse covering $\mathcal{Y}$ gives rise to a new tree, $S$,
called the \emph{skeleton of $\mathcal{Y}$} as follows: the vertex set
$V(S)=V_0\cup V_1$ where the elements of $V_0$ are in one-to-one
correspondence with elements of $\mathcal{Y}$, and the elements of
$V_1$ are in correspondence with nonempty intersections (necessarily
consisting of a single point) of distinct elements of $\mathcal{Y}$.
Edges are determined by set containment: there is an edge from
$x\in V_1$ to $Y\in V_2$ if $x\in Y$.  The action of $G$ on
$\mathcal{Y}$ determines an action $G\curvearrowright S$.

We use Lemma \ref{lem:EdgeStabilizers} to define a transverse covering
of $T$ whose skeleton will be acylindrical.  For an edge $E$ of $T$,
we let $T_E$ be the forest in $T$ consisting of edges whose stabilizer
is equal to $\Stab(E)$.  That $T_E$ is connected follows from Lemma
\ref{lem:EdgeStabilizers}. We are interested in understanding the
stabilizer $\Stab(T_E)$, so suppose that $E,E'\in T_E$.  Without loss
of generality, assume that $\Stab(E)=\langle t\rangle$ and that $w\in
\Stab(T_E)$ is such that $w^{-1}\cdot E=E'\in T_E$.  As above, we can
write $w$ uniquely as $w=gt^k$.  The definition of $T_E$ provides that
the stabilizer of $E'$ is equal to $\langle t\rangle$.  On the other
hand,
\begin{equation*}
  \Stab(E')=\langle wtw^{-1} \rangle
  = \langle(gt^k) t(t^{-k}g^{-1})\rangle
  =\langle gtg^{-1}\rangle
  =\langle g\Phi(g)^{-1}t\rangle.
\end{equation*}
In particular, $g=\Phi(g)$, and we conclude that
$\Stab(T_E)\simeq \langle t\rangle\times \Fix(\Phi)$.  Neumann proved
\cite{Neumann:FixedGroupOfAnAutomorphism} that for a hyperbolic group
$G$, $\Fix(\Phi)$ is quasiconvex in $G$ and therefore is itself a
hyperbolic group.  Since $\AC(\vnil)$ is closed under taking products,
and hyperbolic groups belong to $\AC(\vnil)$, we conclude that $\Stab(T_E)$
belongs to $\AC(\vnil)$.

The subtrees $\{T_E\}_{E\in T}$ form a transverse covering
$\mathcal{Y}$ of $T$. Let $S$ denote the skeleton of this transverse
cover (this is the {\it tree of cylinders} of \cite{gl:cylinders}). We
observe the following:
\begin{lemma}
  The action $G_\Phi\curvearrowright S$ is acylindrical.
\end{lemma}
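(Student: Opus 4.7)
The plan is to show directly that any nontrivial $\gamma \in G_\Phi$ fixes only a uniformly bounded subtree of $S$, by exploiting the alternating bipartite structure of $S$ and invoking Lemma \ref{lem:EdgeStabilizers}. Concretely, I would aim to prove that the stabilizer of any segment of length at least $5$ in $S$ is trivial, which gives acylindricity with $D=5$.

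First, I would recall the bipartite structure: vertices of $S$ split as $V_0 \sqcup V_1$, with $V_0$ in bijection with the subtrees $T_E$ and $V_1$ in bijection with intersection points, each of which is a single vertex of $T$. Edges of $S$ only join $V_0$ to $V_1$, so any geodesic in $S$ strictly alternates between the two types; in particular, every segment of length $\geq 5$ contains at least three $V_1$-vertices. The next observation is that the stabilizer in $G_\Phi$ of a $V_1$-vertex $x$ equals the $G_\Phi$-stabilizer of the corresponding vertex of $T$, because the $G$-action on $\mathcal{Y}$ permutes intersection points exactly as $G$ permutes the underlying vertices of $T$.

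The main step is then the following: if $\gamma$ fixes two $V_1$-vertices $x_i, x_{i+1}$ that are joined in $S$ through a common $V_0$-vertex $T_E$, both $x_i$ and $x_{i+1}$ lie in $T_E \subset T$, and since $x_i \neq x_{i+1}$ the geodesic $[x_i, x_{i+1}]$ in $T$ is contained in $T_E$ and has at least one edge. All such edges have stabilizer $\Stab(E)$, so $\gamma \in \Stab(E)$. Applied to a segment of length $\geq 5$, which contains at least three $V_1$-vertices linked by two distinct $V_0$-vertices $T_E \neq T_{E'}$, this forces $\gamma \in \Stab(E) \cap \Stab(E')$. By construction distinct subtrees $T_E$ have distinct stabilizers, so Lemma \ref{lem:EdgeStabilizers} applies to give $\Stab(E) \cap \Stab(E') = 1$, hence $\gamma = 1$.

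I don't anticipate a serious obstacle here: Lemma \ref{lem:EdgeStabilizers} and the transverse covering by the $T_E$ were set up precisely to convert the ``parallel edges same stabilizer'' phenomenon into a tree whose edge stabilizers collapse to the identity on any sufficiently long overlap. The only points to handle carefully are the bookkeeping with the $V_0/V_1$ alternation (to ensure that length $5$ is indeed enough to force three $V_1$-vertices) and verifying that a fixed $V_1$-vertex in $S$ genuinely corresponds to a fixed vertex of $T$ with the same $G_\Phi$-stabilizer.
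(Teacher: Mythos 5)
Your proof is correct and takes essentially the same route as the paper: both use the bipartite structure of the skeleton to find two distinct $V_0$-vertices $T_E\neq T_{E'}$ along the fixed segment, deduce that the element stabilizes edges of $T$ with distinct stabilizers, and conclude triviality from Lemma \ref{lem:EdgeStabilizers}. The differences are only cosmetic (your constant $5$ versus the paper's $6$, and your bookkeeping via consecutive $V_1$-vertices rather than moving the endpoints into $V_1$).
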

\begin{proof}
  Let $v,v'\in V(S)$ be vertices with $d(v,v')\geq 6$ and suppose
  $g\in\Stab(v)\cap\Stab(v')$.  By moving to adjacent vertices if
  necessary, we may assume that $v,v'\in V_1$ so that they are labeled
  by intersections of subtrees in $\mathcal{Y}$ (i.e., points in $T$)
  rather than by trees themselves.  We will again denote by $v$ and
  $v'$ the corresponding points in $T$.  Now $g$ must fix the segment
  in $T$ connecting $v$ to $v'$, and even after moving to adjacent
  vertices we still have $d_S(v,v')\geq 4$.  In particular, there are
  two vertices in $V_0$ on the segment in $S$ between $v$ and $v'$;
  hence the segment connecting $v$ and $v'$ in $T$ contains edges in
  two distinct subtrees $T_E$ and $T_{E'}$.  So
  $g\in \Stab(T_E)\cap\Stab(T_{E'})$ must stabilize two edges with
  different stabilizers and so $g=1$ by Lemma
  \ref{lem:EdgeStabilizers}.
\end{proof}

To conclude our proof of Proposition \ref{prop:PGFJC} we recall that
the vertex stabilizers in $S$ come in two flavors: stabilizers of
vertices in $V_1$ are subgroups of vertex stabilizers in $T$, which
belong to $\AC(\vnil)$ by induction; and stabilizers of vertices in
$V_0$, which are isomorphic to $Fix(\Phi)\times \mathbb{Z}$ and also belong to
$\AC(\vnil)$.  We have thus produced an acylindrical action of
$G_\phi$ on a tree $S$ in which all stabilizers belong to
$\AC(\vnil)$.  Theorem \ref{Knopf} implies that $G_\phi$ belongs to
$\AC(\vnil)$ as well.
\end{proof}


\section{The EG case ($\infty$-ly many ends)}
\label{sec:EG-autos}

The following proposition is all that remains of Theorem
\ref{thm:main-hyperbolic} and is the goal of the present section.

\begin{proposition}\label{prop:EGFJC}
  Let $G$ be a torsion-free hyperbolic group with infinitely many ends
  and let $\Phi\in\Out(G)$ be an automorphism that does not grow
  polynomially. 
  Then $G_\Phi\in\AC(\vnil)$.   
\end{proposition}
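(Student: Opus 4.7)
The strategy is to apply the Dahmani--Krishna structure theorem together with Bartels' relative hyperbolic inheritance result (Theorem \ref{Bartels}). After replacing $\Phi$ by a suitable power, which is permitted by the finite index inheritance properties of $\AC(\vnil)$ in Proposition \ref{prop:FJCInheritance}(i), the main theorem of \cite{DK:RelHypHypAutos} provides that $G_\Phi$ is hyperbolic relative to a finite collection $\mathcal{H}=\{H_1,\ldots,H_m\}$ of subgroups. The content of the Dahmani--Krishna theorem is that each peripheral subgroup $H_i$ has the form $K_i \rtimes_{\Phi_i}\Z$, where $K_i$ is a torsion-free hyperbolic subgroup of $G$ and $\Phi_i$ is an automorphism of $K_i$ that grows polynomially; heuristically, the peripheral structure isolates the ``polynomial part'' of $\Phi$ so that the remaining hyperbolic geometry is captured by the relative hyperbolicity.

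Given this, the plan is to show that each $H_i$ lies in $\AC(\vnil)$ and then apply Theorem \ref{Bartels} to conclude $G_\Phi \in \AC(\vnil)$. To place $H_i$ in $\AC(\vnil)$, one distinguishes cases according to the number of ends of $K_i$. If $K_i$ is finite or virtually cyclic, then $H_i$ is virtually $\Z^2$ (or virtually cyclic), so lies in $\vnil \subseteq \AC(\vnil)$. If $K_i$ is one-ended, Proposition \ref{prop:FJC1EndedHyperbolicByCyclic} applies directly. If $K_i$ has infinitely many ends, then since $\Phi_i$ is polynomially growing, Proposition \ref{prop:PGFJC} applies. In every case $H_i \in \AC(\vnil)$, and Bartels' theorem completes the proof.

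The main obstacle is purely bookkeeping: confirming that the peripheral subgroups produced by the Dahmani--Krishna decomposition genuinely fit into the framework of mapping tori of polynomially growing automorphisms on torsion-free hyperbolic groups that were handled in Sections \ref{sec:one-ended} and \ref{sec:PG-autos}. In particular, one must verify from their statement that (a) the $K_i$ inherit torsion-freeness and hyperbolicity from $G$, and (b) the restrictions $\Phi_i$ are bona fide polynomially growing automorphisms in the sense of Section \ref{sec:PG-autos}, not merely sub-exponentially growing ones. Once this identification is made, no further geometric construction is needed: both Dahmani--Krishna and Bartels' theorems are applied as black boxes, and the work is entirely subsumed into the earlier propositions.
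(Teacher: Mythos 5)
Your proposal is correct and follows essentially the same route as the paper: invoke the Dahmani--Krishna theorem to see that $G_\Phi$ is hyperbolic relative to mapping tori of polynomially growing automorphisms of torsion-free hyperbolic groups, place these peripherals in $\AC(\vnil)$ via Propositions \ref{prop:PGFJC} and \ref{prop:FJC1EndedHyperbolicByCyclic}, and finish with Theorem \ref{Bartels}. The only differences are cosmetic: passing to a power of $\Phi$ is not needed since the structure theorem applies directly, and your explicit case split on the ends of the peripheral factors (including the elementary case) is a harmless elaboration of what the paper states tersely.
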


It is known from train-track theory that if $||\phi^n(g)||$ does not
grow polynomially then it grows exponentially, but we will not need this.
The result follows immediately from the work above and the following
theorem of Dahmani-Krishna.
\begin{theorem}[{\cite[Theorem 1.1 and 1.2]{DK:RelHypHypAutos}}]\label{prop:RelHypHypAutos}
  If $G$ is a torsion-free hyperbolic group, and $\Phi\in\Aut(G)$,
  then $G_\Phi$ is hyperbolic relative to a family
  $\{P_1,\ldots,P_k\}$ of subgroups, each of which has the form
  $H_i\rtimes_{\psi_i}\mathbb{Z}$ for a finitely generated
  torsion-free hyperbolic group $H_i$ and a polynomially growing
  automorphism $\psi_i\colon H_i\to H_i$.  Moreover, an element
  $h\in G$ grows polynomially under $\Phi$ if and only if it is
  conjugate into some $H_i$.
\end{theorem}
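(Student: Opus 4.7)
The plan is to combine relative train-track technology for outer automorphisms of hyperbolic groups with a combination theorem for relative hyperbolicity. First I would replace $\Phi$ by a convenient power (compare Proposition \ref{CT2}) and work with a Grushko decomposition $G = G_1*\cdots*G_n*F_r$ each of whose factors is $\Phi$-invariant up to conjugation. On a Grushko tree $T$, I would apply an extension of Theorem \ref{CT} covering the general (not purely polynomial) case, producing a filtered $\Phi$-equivariant map $f: T \to T$ whose strata are individually marked as polynomial (a permutation on one orbit of edges, as in Theorem \ref{CT}) or exponential (with Perron-Frobenius transition matrix). This is the ``improved relative train track'' analogue in the hyperbolic setting; versions suitable for this step exist in work generalizing Collins-Turner cited in the PG section.

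Next I would identify the collection $\{H_i\}$ combinatorially from this filtration. Each polynomial subgroup $H_i$ should arise as the vertex group of a splitting obtained by collapsing all edges whose orbits lie in an exponential stratum, together with a contribution from each one-ended Grushko factor $G_j$ on which $\Phi$ restricts to a polynomially growing automorphism (necessarily so by the hyperbolic JSJ structure applied as in Corollary \ref{fixed}). Each $H_i$ is then finitely generated, quasi-convex in $G$ (hence torsion-free hyperbolic), and $\Phi$-invariant up to conjugation, with polynomially growing restriction $\psi_i$. For the equivalence ``polynomial growth $\Leftrightarrow$ conjugate into some $H_i$,'' the hard direction is the forward one: an element whose train-track representative crosses an exponential stratum has its translation length in $T$ multiplied by the dominant Perron-Frobenius eigenvalue at each iteration, forcing exponential growth of $\|\Phi^n(\cdot)\|$; the converse is immediate since each $\psi_i$ is polynomial.

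The core step is then proving that $G_\Phi$ is hyperbolic relative to $\{H_i \rtimes_{\psi_i}\Z\}$. I would build a mapping telescope $X$ for $f: T \to T$, whose universal cover realizes $G_\Phi$ geometrically, and coarsely collapse the subcomplexes corresponding to the polynomial part to apply a combination theorem in the Bestvina-Feighn style, in a form suited to relative hyperbolicity such as that of Mj-Reeves (or Dahmani's original combination theorem). The annulus flaring condition needs to be verified only in directions transverse to the polynomial peripherals, where the expanding Perron-Frobenius eigenvalue of the exponential strata provides exponential divergence under both forward and backward iterates of $\Phi$. The main obstacle is controlling the interaction between polynomial and exponential strata across stratum boundaries: cancellations inside the edge-path $f(e)$ for an exponential edge $e$ must be uniformly bounded (a hyperbolic-group analogue of Cooper's bounded cancellation lemma), and the peripheral family $\{H_i\}$ must be shown to be almost malnormal for the relative hyperbolicity conclusion to make sense, which I would deduce from quasi-convexity of the $H_i$'s together with Neumann-style control of fixed subgroups of powers of $\Phi$.
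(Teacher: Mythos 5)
You are proving the wrong thing relative to this paper: Theorem \ref{prop:RelHypHypAutos} is not proved in the paper at all. It is imported verbatim from Dahmani--Krishna \cite{DK:RelHypHypAutos} (Theorems 1.1 and 1.2 there) and used as a black box; its only role here is as the input to the short argument in Section \ref{sec:EG-autos}. So there is no internal proof to compare against, and what you have written is an attempted outline of the external theorem itself.

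Judged as such a proof, your outline follows the broad strategy that does appear in the literature (train-track representatives over a Grushko tree plus a combination theorem for relative hyperbolicity, in the spirit of Gautero--Lustig and Ghosh for free groups and of Dahmani--Li and Dahmani--Krishna in general), but every step you defer is precisely the hard technical content of those papers, so there are genuine gaps. First, the assertion that a conjugacy class crossing an exponential stratum has its length multiplied by the Perron--Frobenius eigenvalue under iteration is false for a raw relative train track map: cancellation between strata can destroy growth, and the equivalence ``crosses an EG stratum $\Leftrightarrow$ exponential growth'' needs either a completely-split/CT-quality representative or a bounded-cancellation-plus-legality argument that you only gesture at. (Also, the one-ended Grushko factors lie in the polynomial part simply because $\|\cdot\|$ is measured relative to the vertex stabilizers, so $\|\Phi^n(g)\|=0$ for $g$ in a factor; Corollary \ref{fixed} is not what is needed there.) Second, the identification of the $H_i$, their finite generation, quasiconvexity, $\Phi$-invariance up to conjugacy, and above all the almost malnormality of the peripheral family are asserted rather than proved; malnormality does not follow formally from quasiconvexity together with Neumann's fixed-subgroup theorem, and it is one of the delicate points in \cite{DK:RelHypHypAutos}. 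Third, the flaring/annuli condition for the mapping telescope ``transverse to the polynomial part'' is exactly the analytic core of the Mj--Reeves-style combination argument and is not verified. In the context of this paper the correct move is simply to cite \cite{DK:RelHypHypAutos}; as a standalone proof, your proposal is a plausible roadmap but not a proof.
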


\begin{proof}[Proof of Proposition \ref{prop:EGFJC}]
  Let $G$ and $\Phi$ be as in the statement. Theorem \ref{prop:RelHypHypAutos} shows that $G_\Phi$ is hyperbolic relative to a collection of subgroups, each of which is a mapping torus of a polynomially growing automorphism of a torsion-free hyperbolic group.  By Proposition \ref{prop:PGFJC} (and Proposition \ref{prop:FJC1EndedHyperbolicByCyclic}),
such groups are in $\AC(\vnil)$.  An application of Theorem \ref{Bartels} then shows that $G_\Phi\in\AC(\vnil)$.
\end{proof}

\section{Extensions}
\label{sec:extensions}
In this section, we prove Theorem
\ref{thm:extensions-hyperbolic}.

\newtheorem*{thm:extensions-hyperbolic}{Theorem
  \ref{thm:extensions-hyperbolic}}
\begin{thm:extensions-hyperbolic}
  Let $G$ be a virtually torsion-free hyperbolic group and let
  \begin{equation*}
    1\longrightarrow G\longrightarrow
    H\overset{\pi}{\longrightarrow} Q{\longrightarrow} 1
  \end{equation*}
  be a short exact sequence of groups.  If $Q$ satisfies the
  Farrell-Jones conjecture, then
  so does $H$.
\end{thm:extensions-hyperbolic}

\begin{proof}
  We use the general fact (see \cite[Theorem 2.7]{BFL:jams}) that if
  $Q$ satisfies FJC and for every virtually cyclic subgroup $Z<Q$ the
  preimage $\pi^{-1}(Z)<H$ satisfies FJC, then $H$ satisfies
  FJC. Further, it suffices to show that a finite index subgroup of
  $\pi^{-1}(Z)$ satisfies FJC. The preimage of a finite index cyclic
  subgroup of $Z$ will have the form $G\rtimes \Z$ and will have
  finite index in $\pi^{-1}(Z)$. Theorem \ref{thm:main-hyperbolic}
  implies that $G\rtimes \Z$ satisfies FJC.
\end{proof}

\section{Proof of Theorem \ref{neat}}\label{s:neat}

The proof will
run by induction on the Grushko rank, so we will be
considering the splitting $G=G_1*G_2$ or $G=G'*_1$ given by the
topmost edge in Proposition \ref{CT2}. The basis of the induction is
Corollary \ref{fixed}. To summarize the inductive step, we
will prove the following.

\begin{proposition}\label{step}
  Suppose a torsion-free group $G$ is acting on a tree $T$ with
  trivial edge stabilizers, $\Phi\in Aut(G)$ is an automorphism, and
  $t:T\to T$ is a $\Phi$-equivariant isometry, meaning that
  $t(g(x))=\Phi(g)t(x)$ for any $x\in T, g\in G$. Also assume that $t$
  preserves the $G$-orbits of oriented edges. Suppose further that
  whenever $v$ is a vertex of $T$ such that $t(v)=v$ then $\Phi|V:V\to
  V$ is neat, where $V=Stab(v)$. Then $\Phi$ is neat.
\end{proposition}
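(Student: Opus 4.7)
Let $g\in G$ with $\Phi^k(g)=g$ for some $k>0$; we want to conclude $\Phi(g)=g$. We may assume $g\neq 1$. The natural approach is to split into cases based on whether $g$ acts on $T$ elliptically or hyperbolically. A useful preliminary is that no element of $G$ inverts an edge of $T$: if $h\in G$ inverted an edge $e$ then $h^2\in\Stab(e)=1$, producing torsion in the torsion-free group $G$. Combined with the hypothesis that $t$ preserves $G$-orbits of oriented edges, this also forces $t$ not to invert any edge. Consequently, every elliptic tree isometry in sight fixes an actual vertex.

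\textbf{Elliptic case.} Since edge stabilizers are trivial, $g$ fixes a unique vertex $v\in T$. The relation $\Phi^k(g)=g$ gives $t^k g t^{-k}=g$ in $G\rtimes_\Phi\Z$, so $t^k$ commutes with $g$ and hence preserves the singleton $\{v\}$: $t^k(v)=v$. If we can show $t(v)=v$, then $\Phi$ restricts to an automorphism of $V=\Stab(v)$, which is neat by hypothesis, and $\Phi^k|_V(g)=g$ then gives $\Phi(g)=g$. To prove $t(v)=v$, consider the finite orbit $\{v_i=t^i(v):0\le i<k\}$ and its convex hull $C\subset T$, a $t$-invariant finite subtree. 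An isometry of a finite tree has a fixed point, and by the preliminary observation this point is a vertex $u\in C$. Assume toward a contradiction that $u\neq v$ and study the path $[v,u]\subset C$. The element $g$ fixes $v$ but no edge; the map $\bar t\colon T/G\to T/G$ is the identity on oriented edges; and the $t^i$-translates $[v_i,u]$ of $[v,u]$ all have the common endpoint $u$. Analyzing the combinatorics of these paths near $v$ and their $G$-orbit structure should yield the required contradiction.

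\textbf{Hyperbolic case.} Let $L_g$ be the axis of $g$. Since $t^k$ commutes with $g$ it preserves $L_g$ as an oriented line. It suffices to show $L_{\Phi(g)}=L_g$ with matching orientation, because then $\Phi(g)g^{-1}$ fixes $L_g$ pointwise, and triviality of edge stabilizers gives $\Phi(g)=g$. First, if $L_g$ and $L_{\Phi(g)}$ share an infinite ray, then along a subray $g$ and $\Phi(g)$ translate by the same amount in the same direction (or the opposite), so $g^{-1}\Phi(g)$ or $g\Phi(g)$ fixes infinitely many edges and must be trivial; the alternative $\Phi(g)=g^{-1}$ is ruled out because it forces $t$ to act on $L_g$ as a reflection, which would invert an edge of $L_g$, contradicting the preliminary. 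For the remaining case that the axes in the $t$-orbit $L_g,L_{\Phi(g)},\ldots,L_{\Phi^{k-1}(g)}$ pairwise meet only in bounded sets, the plan is to use the oriented-edge-orbit hypothesis: for an edge $e$ of $L_g$ oriented in $g$'s direction, write $t(e)=h_e\cdot e$ for the unique $h_e\in G$ (unique by triviality of $\Stab(e)$). The cocycle identity $h_{ge}=\Phi(g)h_eg^{-1}$, telescoped $k$ times and combined with $\Phi^k(g)=g$, should force $h_e\in\langle g\rangle$ and hence $t(L_g)=L_g$.

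\textbf{Main obstacle.} The elliptic case is the harder one. The issue is that the $t$-fixed vertex $u$ produced in the convex hull need not equal the $g$-fixed vertex $v$, and the neatness hypothesis on $\Phi|_U$ gives information about elements of $U$ rather than about $g\in V$. Bridging this gap, that is, ruling out the possibility $u\neq v$ purely from the two structural hypotheses (trivial edge stabilizers and preservation of oriented edge orbits) together with $g$ being $\Phi$-periodic, is the crux of the argument and the step that will require the most care.
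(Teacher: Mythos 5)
Your proposal does not close the proof; the missing step is exactly the one the paper's argument is built around. In your elliptic case you reduce to showing that $t^k(v)=v$ forces $t(v)=v$, and you state that you hope to rule out $u\neq v$ ``purely from the two structural hypotheses'' by combinatorics near $v$. That cannot work: without the neatness hypothesis at $t$-fixed vertices the implication is simply false. Take $G=\Z*F_2$ with $F_2=\langle x,y\rangle$, $T$ the Bass--Serre tree of this free product, $\Phi$ the identity on $\Z=\langle a\rangle$ and the swap $x\leftrightarrow y$ on $F_2$; then $g=xax^{-1}$ satisfies $\Phi^2(g)=g$, $\Phi(g)\neq g$, the vertex $v$ fixed by $g$ satisfies $t^2(v)=v$ but $t(v)\neq v$, and all your structural hypotheses hold --- only the neatness of $\Phi$ on the stabilizer of the $t$-fixed vertex fails. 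So any correct argument must inject neatness into the passage from $t^k(v)=v$ to $t(v)=v$. The paper does this with a single lemma you are missing: if $t$ fixes a vertex $w$ and $c$ is an edge at $w$ with finite $t$-orbit, write $t(c)=a(c)$ with $a\in \Stab(w)$ (possible by the oriented-orbit hypothesis and triviality of edge stabilizers); $t^n(c)=c$ forces $\Phi^n(a)=a$, neatness of $\Phi|\Stab(w)$ gives $\Phi(a)=a$, telescoping gives $t^m(c)=a^m(c)$, hence $a^n=1$ and $a=1$ by torsion-freeness, so $t(c)=c$. This yields $\Fix(t^n)=\Fix(t)$ whenever $t$ is elliptic, which immediately gives $t(v)=v$ and finishes your elliptic case.

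Your hyperbolic case has gaps as well. The exclusion of $\Phi(g)=g^{-1}$ via ``a reflection of $L_g$ would invert an edge'' is incorrect: an orientation-reversing isometry of $L_g$ may be a reflection about a \emph{vertex} $w\in L_g$, which inverts no edge. This configuration can be excluded, but again only by using neatness at the $t$-fixed vertex $w$ (for instance: if $e_1,e_2$ are the two edges of $L_g$ at $w$, oriented away from $w$, then $t(e_1)=h(e_1)=e_2$ forces $h\in\Stab(w)$ and $\Phi(h)=h^{-1}$, and neatness plus torsion-freeness gives $h=1$, a contradiction), or by the paper's route. And your second subcase ends with ``the cocycle identity \ldots should force $h_e\in\langle g\rangle$,'' which is a hope, not an argument. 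The paper's organization avoids all of this by splitting on the type of $t$ rather than of $g$: if $t$ is loxodromic, a short axis argument shows $\Phi$ is neat outright; if $t$ is elliptic, then $t^n$ commutes with $g$, an elliptic isometry commuting with a translation of $L_g$ must fix $L_g$ pointwise, and $\Fix(t^n)=\Fix(t)$ (the lemma above) shows $t$ fixes $L_g$ pointwise, whence $g$ and $tgt^{-1}=\Phi(g)$ agree on $L_g$ and are equal. You should adopt that decomposition and prove the ``periodic edges at fixed vertices are fixed'' lemma; without it both of your cases remain open.
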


We then apply this to the splitting of $G$ given by the topmost edge
in Proposition \ref{CT2}. Proposition \ref{step} is proved in a
sequence of three
lemmas.

\begin{lemma}\label{loxodromic}
  Suppose a lift $\Phi\in Aut(G)$ of $\phi$ is realized as a
  loxodromic $\Phi$-equivariant isometry $t:T\to T$. Then $\Phi$ is
  neat.
\end{lemma}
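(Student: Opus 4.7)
The plan is to translate the hypothesis $\Phi^k(g)=g$ into a statement about how $g$ interacts dynamically with the isometry $t$ on $T$, and then use that $t$ is loxodromic to force $g$ to commute with $t$ itself, not merely with $t^k$.

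First I would set up the basic dictionary. The $\Phi$-equivariance of $t$ says $t\circ h = \Phi(h)\circ t$ as isometries of $T$ for every $h\in G$, so by induction $t^k\circ h = \Phi^k(h)\circ t^k$. If $g\neq 1$ satisfies $\Phi^k(g)=g$ for some $k\neq 0$, this specializes to
\begin{equation*}
  t^k\circ g = g\circ t^k,
\end{equation*}
i.e.\ $g$ commutes with $t^k$ as isometries of $T$. Since $t$ is loxodromic with some axis $A$, so is $t^k$, and any isometry commuting with a loxodromic isometry must preserve its axis; therefore $g(A)=A$.

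Next I would rule out the possibility that $g$ is elliptic on $T$. An elliptic isometry of $T$ that preserves the line $A$ either acts on $A$ as the identity, or as a ``reflection'' with a unique fixed point on $A$. In the first case $g$ fixes an edge of $A$, which contradicts triviality of edge stabilizers since $g\neq 1$. In the second case $g^2$ acts trivially on $A$, so $g^2$ fixes an edge, giving $g^2=1$; since $G$ is torsion-free this again forces $g=1$. Thus $g$ must be loxodromic. A loxodromic isometry that preserves the line $A$ must translate along $A$, so the axis of $g$ coincides with $A$.

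Finally, both $g$ and $t$ act as translations on the common line $A$, and translations along a common line commute. So $tg=gt$ as isometries of $T$, whence $\Phi(g)=t\,g\,t^{-1}=g$. This proves $\Phi$ is neat. I do not expect a real obstacle here: the only point that needs care is the elliptic case, where one must combine the triviality of edge stabilizers with torsion-freeness of $G$ to preclude a reflection of $A$; once that is handled, the rest is immediate from the geometry of axes in trees.
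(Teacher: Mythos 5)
Your overall strategy is the same as the paper's: from $\Phi^k(g)=g$ you deduce that $g$ commutes with $t^k$, you rule out $g$ being elliptic, you identify the axis of $g$ with the axis $A$ of $t$, and you conclude using trivial edge stabilizers. The elliptic case is handled by a slightly different (but valid) route: the paper observes that $\mathrm{Fix}(g)$ is a single vertex because edge stabilizers are trivial, so $t^n$ would have to fix that vertex, contradicting loxodromicity; you instead use invariance of $A$ and exclude a reflection via torsion-freeness of $G$. Both arguments work, and torsion-freeness is available in the setting of Proposition \ref{step}.

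The one inference that does not hold as written is the final step: ``translations along a common line commute, so $tg=gt$ as isometries of $T$.'' An isometry of a tree is not determined by its restriction to an invariant line, so two isometries that translate along the same line $A$ need not commute on all of $T$; a priori their commutator only fixes $A$ pointwise. The short repair is exactly the paper's closing move: $\Phi(g)=tgt^{-1}$ is an element of $G$ whose axis is $t(A)=A$ and whose signed translation length equals that of $g$, so $g^{-1}\Phi(g)\in G$ fixes $A$ pointwise, in particular fixes an edge, and hence is trivial because edge stabilizers are trivial; therefore $\Phi(g)=g$. (This also shows, a posteriori, that $tg=gt$ does hold here, but only after invoking trivial edge stabilizers, which is the hypothesis you must cite at this point.) With that one sentence added, your argument is complete and essentially identical to the paper's.
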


\begin{proof}
  Suppose $\Phi^n(g)=g$ for some $n>0$ and $g\in G, g\neq 1$. If $g$
  is elliptic in $T$, its fixed point set is a single vertex since
  edge groups are trivial. Thus $t^n$ fixes this vertex contradicting
  the assumption that $t$ is loxodromic. Thus $g$ is loxodromic, and
  its axis is preserved by $t^n$. It follows that $t$ and $g$ have the
  same axis. Thus $g$ and $tgt^{-1}=\Phi(g)$ have the same axis and
  the same (signed) translation length, so $g^{-1}\Phi(g)$ fixes many edges and
  must be trivial.
\end{proof}

Now we assume that $t:T\to T$ realizing $\Phi$ is an elliptic
isometry. The next lemma says that in this case periodic edges
incident to a fixed vertex are fixed.

\begin{lemma}\label{periodic edges are fixed}
  If $t$ fixes a vertex $v$ of $T$, then the incident edges are either
  fixed by $t$, or the $t$-orbit is infinite.
\end{lemma}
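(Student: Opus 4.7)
The plan is to take an edge $e$ incident to $v$ with a finite $t$-orbit and show $t(e)=e$. Since $t$ fixes $v$ and preserves $G$-orbits of oriented edges, the edge $t(e)$ lies in the same $G$-orbit as $e$ and is incident to $v$, so $t(e)=g\cdot e$ for some $g\in G$; matching initial endpoints forces $g\in V\defeq \Stab(v)$. Note that $\Phi(V)\subseteq V$ by the equivariance identity together with $t(v)=v$, and similarly $\Phi^{-1}(V)\subseteq V$, so $\Phi$ restricts to an automorphism of $V$ to which the hypothesis of neatness applies.

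Next I would iterate, using $t(h\cdot x)=\Phi(h)\cdot t(x)$, to obtain the cocycle formula
\[
t^n(e)=\bigl(\Phi^{n-1}(g)\,\Phi^{n-2}(g)\cdots\Phi(g)\,g\bigr)\cdot e
\]
for every $n\geq 1$. Suppose now that the $t$-orbit of $e$ is finite, and pick $k\geq 1$ with $t^k(e)=e$. Since the edge stabilizer is trivial, this gives the relation
\[
\Phi^{k-1}(g)\,\Phi^{k-2}(g)\cdots\Phi(g)\,g \;=\; 1.
\]
Applying $t$ once more, $t^{k+1}(e)=t(e)=g\cdot e$; but the formula above with $n=k+1$ evaluates this product as $\Phi^k(g)\cdot (\Phi^{k-1}(g)\cdots g) = \Phi^k(g)$. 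Thus $\Phi^k(g)=g$, so $g\in V$ is $\Phi$-periodic.

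Now the neatness hypothesis does the work: by assumption $\Phi|_V$ is neat, so $\Phi(g)=g$. Feeding this back into the cocycle formula collapses it to $t^n(e)=g^n\cdot e$, and the equation $t^k(e)=e$ becomes $g^k=1$ (again using trivial edge stabilizers). Since $G$ is torsion-free, $g=1$, i.e.\ $t(e)=e$, as required.

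The only place that requires any care is the computation that produces $\Phi^k(g)=g$ from $t^k(e)=e$; the rest is a direct invocation of the hypotheses. Torsion-freeness and the trivial edge-stabilizer assumption are both used in an essential way, and the neatness of $\Phi|_V$ is precisely the inductive ingredient that lets us collapse periodicity to fixedness at the vertex-group level.
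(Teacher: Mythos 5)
Your proof is correct and follows essentially the same route as the paper: write $t(e)=g\cdot e$ with $g\in\Stab(v)$, deduce that $g$ is $\Phi$-periodic (you via the cocycle formula and one extra application of $t$, the paper by noting $t^n$ fixes $t(e)$ directly), then use neatness of $\Phi|_V$ to get $\Phi(g)=g$, collapse to $g^k=1$, and conclude $g=1$ by torsion-freeness. The only difference is presentational.
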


\begin{proof}
  Let $V=Stab(v)$ and let $c$ be an edge incident to $v$. Then $t(c)$
  is in the same $V$-orbit as $c$, so we may write $t(c)=a(c)$ for
  some $a\in V$. Suppose now that $t^n(c)=c$. Then $t^n$ fixes the
  whole $t$-orbit of $c$, so $t^n(a(c))=a(c)$. This implies
  $\Phi^n(a)t^n(c)=a(c)$, i.e. $\Phi^n(a)=a$. Since $\Phi|V$ is neat,
  this implies $\Phi(a)=a$. Now we see that
  $t^2(c)=t(a(c))=\Phi(a)t(c)=a^2(c)$ and by induction
  $t^k(c)=a^k(c)$. In paricular $c=t^n(c)=a^n(c)$ so $a^n=1$. Since
  $G$ is torsion-free we have $a=1$ so
  $t(c)=c$.
\end{proof}

The next lemma finishes the proof of Proposition \ref{step} and
Theorem \ref{neat}.

\begin{lemma}
  Suppose $t:T\to T$ is elliptic, $g\in G$, $\Phi^n(g)=g$ for some
  $n>0$. Then $\Phi(g)=g$.
\end{lemma}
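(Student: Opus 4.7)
The plan is to split on whether $g$ acts on $T$ elliptically or loxodromically. Before the case split, I would first establish an auxiliary claim that feeds both cases: \emph{every $t$-periodic vertex of $T$ is in fact fixed by $t$.} Since $t$ preserves $G$-orbits of oriented edges it cannot invert any edge, so the elliptic isometry $t$ has a nonempty subtree $F\subseteq T$ of fixed vertices. If $w$ satisfies $t^p(w)=w$, let $v\in F$ be a vertex of $F$ nearest to $w$; then $t^p$ fixes both endpoints of $[v,w]$, and since any tree isometry fixing two points fixes the path between them, $t^p$ in particular fixes the first edge $e$ of $[v,w]$ adjacent to $v$. Lemma~\ref{periodic edges are fixed} then forces $t(e)=e$, putting the far endpoint of $e$ in $F$; this contradicts the choice of $v$ unless $v=w$.

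For elliptic $g\neq 1$, the element $g$ fixes a unique vertex $w$ of $T$ (if it fixed two vertices it would fix an edge and hence be trivial). Rewriting $\Phi^n(g)=g$ as $t^n g t^{-n}=g$ shows that $t^n$ preserves $\mathrm{Fix}_T(g)=\{w\}$, so $t^n(w)=w$; the auxiliary claim upgrades this to $t(w)=w$. Then $V:=\mathrm{Stab}(w)$ is $\Phi$-invariant and $\Phi|_V$ is neat by hypothesis, so $\Phi(g)=g$.

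For loxodromic $g$ with axis $A$, the same rewrite gives $t^n(A)=A$. Since $t^n$ is elliptic in $T$ (it fixes every fixed point of $t$), it cannot translate nontrivially along $A$, so $t^n|_A$ is either the identity or a reflection, and in either case some power of $t^n$ fixes $A$ pointwise. Hence every vertex of $A$ is $t$-periodic, and the auxiliary claim makes every vertex of $A$ fixed by $t$; since $t$ is simplicial, $t$ also fixes every edge of $A$ pointwise. For any vertex $v\in A$ we then have $\Phi(g)(v) = tgt^{-1}(v) = t(g(v)) = g(v)$, using that $g(v)\in A$ is also fixed by $t$. So $g^{-1}\Phi(g)$ fixes every vertex of $A$, whence it stabilizes every edge of $A$; triviality of edge stabilizers forces $g^{-1}\Phi(g)=1$, i.e.\ $\Phi(g)=g$.

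The main obstacle I anticipate is the loxodromic case, specifically showing that \emph{$t$ itself}, rather than merely some power $t^k$, fixes $A$ pointwise --- this is essential because the desired conclusion $\Phi(g)=g$ involves $t$, not $t^k$. The auxiliary claim about periodic vertices is precisely what bridges this gap, and once it is in place the remainder of the argument is routine.
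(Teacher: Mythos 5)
Your proof is correct and follows essentially the same route as the paper: the same split into elliptic and loxodromic $g$, with Lemma~\ref{periodic edges are fixed} used to upgrade $t^n$-invariance to $t$-invariance. Your auxiliary claim is exactly the paper's assertion that the subtree fixed by $t^n$ coincides with $\mathrm{Fix}(t)$, just proved in more detail via the nearest-point argument.
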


\begin{proof}
  First suppose that $g$ is loxodromic in $T$. Then $t^n$ fixes the axis
  of $g$. Lemma \ref{periodic edges are fixed} implies that the
  subtree fixed by $t^n$ is the same as $Fix(t)$, so $t$ also fixes
  the axis of $g$. Thus $g$ and $tgt^{-1}$ have the same axis and the
  same (signed) translation length, so they are equal,
  i.e. $\Phi(g)=g$.

  Now suppose $g$ is elliptic in $T$. Again we have that $t^n$, and
  hence $t$, fixes $Fix(g)$. It follows that there is vertex $v$ of
  $T$ fixed by both $g$ and $t$, and so we have $\Phi(g)=g$ from the
  fact that $\Phi|Stab(v)$ is neat.
\end{proof}

\begin{remark}
  Theorem \ref{neat} holds for all $\phi\in Out(G)$, not only
  polynomially growing ones. This follows from Theorem
  \ref{prop:RelHypHypAutos} and the polynomially growing case since
  any periodic conjugacy class is peripheral.
\end{remark}

\bibliographystyle{alpha}%
\bibliography{bibliography}

\begin{thebibliography}{{Kno}17}

\bibitem[Bar16]{Bar:OnProofsofFJC}
Arthur Bartels.
\newblock On proofs of the {F}arrell-{J}ones conjecture.
\newblock In {\em Topology and geometric group theory}, volume 184 of {\em
  Springer Proc. Math. Stat.}, pages 1--31. Springer, [Cham], 2016.

\bibitem[Bar17]{Bar:RelHypFJC}
A.~Bartels.
\newblock Coarse flow spaces for relatively hyperbolic groups.
\newblock {\em Compos. Math.}, 153(4):745--779, 2017.

\bibitem[BB19]{BB:FJCMCG}
Arthur Bartels and Mladen Bestvina.
\newblock The {F}arrell-{J}ones conjecture for mapping class groups.
\newblock {\em Invent. Math.}, 215(2):651--712, 2019.

\bibitem[BFL14]{BFL:jams}
A.~Bartels, F.~T. Farrell, and W.~L\"{u}ck.
\newblock The {F}arrell-{J}ones conjecture for cocompact lattices in virtually
  connected {L}ie groups.
\newblock {\em J. Amer. Math. Soc.}, 27(2):339--388, 2014.

\bibitem[BFW]{BFW}
Mladen Bestvina, Koji Fujiwara, and Derrick Wigglesworth.
\newblock {F}arrell-{J}ones {C}onjecture for free-by-cyclic groups.
\newblock arXiv:1906.00069v1.

\bibitem[BH92]{BH:TrainTracks}
Mladen Bestvina and Michael Handel.
\newblock Train tracks and automorphisms of free groups.
\newblock {\em Ann. of Math. (2)}, 135(1):1--51, 1992.

\bibitem[BKW21]{BKW}
Benjamin Br\"uck, Dawid Kielak, and Xiaolei Wu.
\newblock The {F}arrell-{J}ones {C}onjecture for normally poly-free groups.
\newblock {\em Proc. Amer. Math. Soc.}, 149:2349--2356, 2021.

\bibitem[BL10]{BL:FJC-Twisted}
Arthur Bartels and Wolfgang L\"uck.
\newblock On crossed product rings with twisted involutions, their module
  categories and {$L$}-theory.
\newblock In {\em Cohomology of groups and algebraic {$K$}-theory}, volume~12
  of {\em Adv. Lect. Math. (ALM)}, pages 1--54. Int. Press, Somerville, MA,
  2010.

\bibitem[BLR08a]{BLR:EquivariantCovers}
Arthur Bartels, Wolfgang L\"{u}ck, and Holger Reich.
\newblock Equivariant covers for hyperbolic groups.
\newblock {\em Geom. Topol.}, 12(3):1799--1882, 2008.

\bibitem[BLR08b]{BLR:FJC-Hyperbolic}
Arthur Bartels, Wolfgang L\"uck, and Holger Reich.
\newblock The {$K$}-theoretic {F}arrell-{J}ones conjecture for hyperbolic
  groups.
\newblock {\em Invent. Math.}, 172(1):29--70, 2008.

\bibitem[BLR08c]{BLR:FJC-Applications}
Arthur Bartels, Wolfgang L\"uck, and Holger Reich.
\newblock On the {F}arrell-{J}ones conjecture and its applications.
\newblock {\em J. Topol.}, 1(1):57--86, 2008.

\bibitem[BLRR14]{BLRR:FJC-GLn}
Arthur Bartels, Wolfgang L\"{u}ck, Holger Reich, and Henrik R\"{u}ping.
\newblock K- and {L}-theory of group rings over {$GL_n({\bf Z})$}.
\newblock {\em Publ. Math. Inst. Hautes \'{E}tudes Sci.}, 119:97--125, 2014.

\bibitem[Bow98]{bowditch}
Brian~H. Bowditch.
\newblock Cut points and canonical splittings of hyperbolic groups.
\newblock {\em Acta Math.}, 180(2):145--186, 1998.

\bibitem[BR07]{BR:FJC-Coeffs}
Arthur Bartels and Holger Reich.
\newblock Coefficients for the {F}arrell-{J}ones conjecture.
\newblock {\em Adv. Math.}, 209(1):337--362, 2007.

\bibitem[CT94]{collins-turner}
D.~J. Collins and E.~C. Turner.
\newblock Efficient representatives for automorphisms of free products.
\newblock {\em Michigan Math. J.}, 41(3):443--464, 1994.

\bibitem[DK20]{DK:RelHypHypAutos}
François Dahmani and Suraj M~S Krishna.
\newblock Relative hyperbolicity of hyperbolic-by-cyclic groups.
\newblock 2020.

\bibitem[DS99]{dunwoody-sageev}
M.~J. Dunwoody and M.~E. Sageev.
\newblock J{SJ}-splittings for finitely presented groups over slender groups.
\newblock {\em Invent. Math.}, 135(1):25--44, 1999.

\bibitem[FH18]{feighn-handel}
Mark Feighn and Michael Handel.
\newblock Algorithmic constructions of relative train track maps and {CT}s.
\newblock {\em Groups Geom. Dyn.}, 12(3):1159--1238, 2018.

\bibitem[FJ93]{FJ:FJC}
F.~T. Farrell and L.~E. Jones.
\newblock Isomorphism conjectures in algebraic {$K$}-theory.
\newblock {\em J. Amer. Math. Soc.}, 6(2):249--297, 1993.

\bibitem[FM15]{francaviglia-martino}
Stefano Francaviglia and Armando Martino.
\newblock Stretching factors, metrics and train tracks for free products.
\newblock {\em Illinois J. Math.}, 59(4):859--899, 2015.

\bibitem[FP06]{fujiwara-papasoglou}
K.~Fujiwara and P.~Papasoglu.
\newblock J{SJ}-decompositions of finitely presented groups and complexes of
  groups.
\newblock {\em Geom. Funct. Anal.}, 16(1):70--125, 2006.

\bibitem[GL07]{GL:RelativeOuterSpace}
Vincent Guirardel and Gilbert Levitt.
\newblock The outer space of a free product.
\newblock {\em Proc. Lond. Math. Soc. (3)}, 94(3):695--714, 2007.

\bibitem[GL11]{gl:cylinders}
Vincent Guirardel and Gilbert Levitt.
\newblock Trees of cylinders and canonical splittings.
\newblock {\em Geom. Topol.}, 15(2):977--1012, 2011.

\bibitem[GL17]{guirardel-levitt}
Vincent Guirardel and Gilbert Levitt.
\newblock J{SJ} decompositions of groups.
\newblock {\em Ast\'{e}risque}, (395):vii+165, 2017.

\bibitem[Gui04]{G:LimitGroups}
Vincent Guirardel.
\newblock Limit groups and groups acting freely on {$\mathbb{R}^n$}-trees.
\newblock {\em Geom. Topol.}, 8:1427--1470 (electronic), 2004.

\bibitem[KLR16]{KLR:FJCLatticesInLieGroups}
Holger Kammeyer, Wolfgang L\"{u}ck, and Henrik R\"{u}ping.
\newblock The {F}arrell-{J}ones conjecture for arbitrary lattices in virtually
  connected {L}ie groups.
\newblock {\em Geom. Topol.}, 20(3):1275--1287, 2016.

\bibitem[{Kno}17]{Knopf:FJCTrees}
S.~{Knopf}.
\newblock {Acylindrical Actions on Trees and the Farrell-Jones Conjecture}.
\newblock {\em ArXiv e-prints}, April 2017.

\bibitem[Lev05]{Levitt:AutomorphismsHyperbolicGroups}
Gilbert Levitt.
\newblock Automorphisms of hyperbolic groups and graphs of groups.
\newblock {\em Geom. Dedicata}, 114:49--70, 2005.

\bibitem[Lym]{lyman}
Rylee~Alanza Lyman.
\newblock Train track maps and {C}{T}s on graphs of groups.
\newblock arXiv:2102.02848.

\bibitem[Neu92]{Neumann:FixedGroupOfAnAutomorphism}
Walter~D. Neumann.
\newblock The fixed group of an automorphism of a word hyperbolic group is
  rational.
\newblock {\em Invent. Math.}, 110(1):147--150, 1992.

\bibitem[Pau91]{paulin}
Fr\'{e}d\'{e}ric Paulin.
\newblock Outer automorphisms of hyperbolic groups and small actions on {${\bf
  R}$}-trees.
\newblock In {\em Arboreal group theory ({B}erkeley, {CA}, 1988)}, volume~19 of
  {\em Math. Sci. Res. Inst. Publ.}, pages 331--343. Springer, New York, 1991.

\bibitem[Rou08]{Roushon:FJC3Manifolds}
S.~K. Roushon.
\newblock The {F}arrell-{J}ones isomorphism conjecture for 3-manifold groups.
\newblock {\em J. K-Theory}, 1(1):49--82, 2008.

\bibitem[RS94]{RS:JSJHyperbolic}
E.~Rips and Z.~Sela.
\newblock Structure and rigidity in hyperbolic groups. {I}.
\newblock {\em Geom. Funct. Anal.}, 4(3):337--371, 1994.

\bibitem[Sel97a]{sela}
Z.~Sela.
\newblock Structure and rigidity in ({G}romov) hyperbolic groups and discrete
  groups in rank {$1$} {L}ie groups. {II}.
\newblock {\em Geom. Funct. Anal.}, 7(3):561--593, 1997.

\bibitem[Sel97b]{Sela:JSJHyperbolicII}
Z.~Sela.
\newblock Structure and rigidity in ({G}romov) hyperbolic groups and discrete
  groups in rank {$1$} {L}ie groups. {II}.
\newblock {\em Geom. Funct. Anal.}, 7(3):561--593, 1997.

\bibitem[Weg12]{Wegner:FCJ-cat0}
Christian Wegner.
\newblock The {$K$}-theoretic {F}arrell-{J}ones conjecture for {CAT}(0)-groups.
\newblock {\em Proc. Amer. Math. Soc.}, 140(3):779--793, 2012.

\bibitem[Weg15]{Wegner:FJC-Virt-solvable}
Christian Wegner.
\newblock The {F}arrell-{J}ones conjecture for virtually solvable groups.
\newblock {\em J. Topol.}, 8(4):975--1016, 2015.

\end{thebibliography}
\end{document}